\newtheorem{remark}[theorem]{Remark}
\newcommand{\R}{\mathbb{R}}
\newcommand{\norm}[1]{\|#1 \|}
\newcommand{\ran}{{\cal R}}
\newcommand{\normtwo}[1]{\lVert #1 \rVert_2}
\newcommand{\argmin}{\arg\min}
\DeclareMathOperator{\Span}{Span} 
\newcommand{\bm}[1]{{#1}}
\newcommand{\mc}[1]{\mathcal{#1}}
\def\cvd{~\vbox{\hrule\hbox{%
  \vrule height1.3ex\hskip0.8ex\vrule}\hrule } }
\title{Multipreconditioned GMRES for Shifted
Systems\footnote{T\lowercase{his version dated} \today}}
\author{\normalsize Tania Bakhos\thanks{Huang Engineering
Center, Stanford University, Stanford, CA 94305 ({\tt
taniab@stanford.edu}, {\tt peterk@stanford.edu}).} \and Peter K.
Kitanidis\footnotemark[2] \and Scott Ladenheim\thanks{School of
Computer Science, University of Manchester, Manchester, UK, M13
9PL ({\tt scott.ladenheim@manchester.ac.uk}).} \and
Arvind~K.~Saibaba\thanks{Department of Mathematics, North
Carolina State University, Raleigh, NC 27695 ({\tt
asaibab@ncsu.edu}).}~\and~Daniel~B.~Szyld\thanks{Department of
Mathematics, Temple University, 1805 N Broad Street,
Philadelphia, PA 19122 ({\tt szyld@temple.edu}). This research
is supported in part by the U.S.\ National Science Foundation
under grant DMS-1418882.}}
\begin{document} \maketitle \begin{abstract} \noindent An
implementation of GMRES with multiple preconditioners (MPGMRES)
is proposed for solving shifted linear systems with
shift-and-invert preconditioners. With this type of
preconditioner, the Krylov subspace can be built without requiring the matrix-vector product with the shifted matrix. Furthermore, the multipreconditioned search space is shown to
grow only linearly with the number of preconditioners. This allows for a more efficient
implementation of the algorithm. The proposed implementation is
tested on shifted systems that arise in computational hydrology
and the evaluation of different matrix functions. The numerical
results indicate the effectiveness of the proposed approach.
\end{abstract}

\section{Introduction} We consider the solution of shifted
linear systems of the form \begin{gather} (A + \sigma_{j}
I)\bm{x}_{j} = \bm{b}, \quad j=1,\ldots,n_{\sigma},
\label{eq:General} \end{gather} where $A\in\R^{n\times n}$ is
nonsingular, $I$ is the $n\times n$ identity matrix, and
$n_{\sigma}$ denotes the number of (possibly complex) shifts
$\sigma_{j}$. We assume that the systems of equations~\eqref{eq:General} have unique solutions for $j=1,\dots,n_\sigma$; i.e., for each $\sigma_j$ we assume $(A+\sigma_j I)$ is invertible. These types of linear systems arise in a wide
range of applications, for instance, in quantum chromodynamics
\cite{frommer1995many}, hydraulic tomography
\cite{saibaba2013flexible}, and in the evaluation of matrix
functions based on the Cauchy integral formula
\cite{higham2010computing}. Computing the solution to these
large and sparse shifted systems remains a significant
computational challenge in these applications.

For large systems, e.g, arising in the discretization of 
three-dimensional partial differential equations, 
solving these systems with direct methods, such as with
sparse LU or Cholesky factorizations, is impractical,
especially considering that a new factorization must be 
performed for each shift.
An attractive option is the use of Krylov subspace iterative
methods. These methods are well-suited for the solution of
shifted systems because they are shift-invariant; see, e.g.,
\cite{Simoncini.Szyld.07}. As a result of this property only a
single shift-independent Krylov basis needs to be generated from 
which all shifted solutions can be computed. 
In this paper 
we consider for its solution a variant of the
generalized minimal residual method (GMRES)
\cite{saad1986gmres},
since the matrix $A$ in \eqref{eq:General} is possibly
nonsymmetric. 

Preconditioning is essential to obtain fast convergence in a
Krylov subspace method. It transforms the original linear system into an equivalent 
system with favorable properties so that the iterative method converges faster. 
For shifted systems, preconditioning can be problematic because it may not
preserve the shift-invariant property of Krylov subspaces. There are a few exceptions though, namely, polynomial preconditioners
\cite{Ahmad.Szyld.VanGijzen.12,jegerlehner1996krylov},
shift-and-invert preconditioners
\cite{gu2007flexible,meerbergen2003solution,meerbergen2010lanczos,saibaba2013flexible},
and nested Krylov approaches \cite{baumann2014nested}. Here, we
consider using several shift-and-invert preconditioners of the
form 
\begin{equation} P_{j}^{-1}=(A+\tau_{j} I)^{-1},
\label{eq:ShiftandInvert}
\end{equation} 
where the values
$\{\tau_{j}\}_{j=1}^{n_p}$ correspond to the different shifts,
with $n_p \ll n_{\sigma}$. For a generic shift $\tau$, we denote
the preconditioner $P_{\tau}^{-1}$. One advantage of using the preconditioner in~\eqref{eq:ShiftandInvert} is that, as shown in the next section, the appropriate preconditioned Krylov subspace can be built without performing the matrix-vector product with $(A+\sigma_j I)$. 

The reason to consider several shifted preconditioners is that a single preconditioner alone is insufficient to effectively
precondition across all shifts, as observed in
\cite{gu2007flexible,saibaba2013flexible}. Incorporating more than one
preconditioner necessitates the use of flexible Krylov subspace
methods; see, e.g.,
\cite{saad1993flexible,Szyld.Vogel.01,vogel2007flexible}.
Flexible methods allow for the application of a different
preconditioner at each iteration and  the
preconditioners are cycled in some pre-specified order. However, this means that information from only one preconditioner is
added to the basis at each iteration. In addition, the order in
which the preconditioners are applied can have a significant
effect on the convergence of the iterative method. To
effectively precondition across the range of shifts we use 
multipreconditioned GMRES (MPGMRES) \cite{greifmpgmres}.

Multipreconditioned Krylov subspace methods offer the capability of using several
preconditioners at each iteration during the course of solving linear systems. 
In contrast to flexible methods using one preconditioner at each iteration,
the MPGMRES approach uses information from all the preconditioners
in every iteration. Since MPGMRES builds a larger and richer search space, the
convergence is expected to be faster than FGMRES. However, the cost per
iteration for MPGMRES can be substantial. To deal with these large computational costs, 
and in particular the exponential growth of the search space, a selective version of the algorithm is proposed in~\cite{greifmpgmres}, where the growth of the search space is linear per iteration, i.e., the same growth as a block method of block size $n_p$.

Our major contribution is the development of a new iterative solver to handle shifted systems of the
form~\eqref{eq:General} that uses multiple preconditioners together with the fact that no multiplication with $(A+\sigma_j I)$ is needed. 
Our proposed method is motivated by MPGMRES and builds a search 
space using information at each iteration 
from multiple shift-and-invert preconditioners. By searching for optimal 
solutions over a richer space, we anticipate the convergence to
be rapid and the number of iterations to be low.  
For this class of preconditioners, the resulting Krylov space
has a special form, and we show that the search space 
grows only linearly. This yields a method with contained
computational and storage costs
per iteration. Numerical experiments illustrate that the proposed
solver is more effective than standard Krylov
methods for shifted systems, in terms of both iteration counts
and overall execution time. 

We show that the proposed approach can also be
applied to the solution of more general shifted systems of the form
$(K+\sigma M)\bm{x} = \bm{b}$ with shift-and-invert
preconditioners of the form $(K+\tau
M)^{-1}$; see \ Sections~\ref{sec:ExtensiontoK}
and~\ref{sec:Hydrology}. 

The paper is structured as follows. In Section
\ref{sec:MPGMRES}, we briefly review some basic properties of GMRES for
the solution of shifted systems with shift-and-invert
preconditioners. We also review the MPGMRES algorithm, and
a flexible GMRES (FGMRES) algorithm for shifted systems proposed in
\cite{saibaba2013flexible}. In Section \ref{sec:shiftedMPGMRES}, we present the proposed MPGMRES implementation for shifted
systems with shift-and-invert preconditioners, 
a new theorem characterizing the linear growth of the MPGMRES
search space, and discuss an efficient implementation of the
algorithm that exploits this linear growth. In Section
\ref{sec:Hydrology}, we present numerical experiments for
shifted linear systems arising in hydraulic tomography
computations. In Section \ref{sec:MatFun}, we present another
set of numerical experiments for the solution of shifted systems 
arising from the evaluation of different matrix functions.
Concluding remarks are given in Section~\ref{sec:Conc}.

 \section{GMRES, MPGMRES, and FMGRES-Sh}
\label{sec:MPGMRES}
To introduce the proposed multipreconditioned approach for GMRES applied to shifted linear systems, the original GMRES algorithm \cite{saad1986gmres}, the multipreconditioned version MPGMRES \cite{greifmpgmres}, GMRES for shifted systems, and flexible GMRES for shifted systems (FGMRES-Sh)~\cite{gu2007flexible,saibaba2013flexible} are first reviewed.

\subsection{GMRES}
GMRES is a Krylov subspace iterative method for solving large, sparse, nonsymmetric linear systems of the form
\begin{equation}
A\bm{x}=\bm{b}, \quad A\in\R^{n\times n},\quad \bm{b}\in\R^{n}.
\label{eq:LinearSystem}
\end{equation}
From an initial vector $\bm{x}_{0}$ and corresponding residual $\bm{r}_{0}=\bm{b}-A\bm{x}_{0}$, GMRES computes at step $m$ an approximate solution $\bm{x}_{m}$ to \eqref{eq:LinearSystem} belonging to the affine Krylov subspace $\bm{x}_{0}+\mc{K}_{m}(A,\bm{r}_{0})$, where 
\begin{equation}\label{eqn:krylov} \nonumber
\mc{K}_{m}(A,\bm{r}_{0}) \equiv \>\Span \{\bm{r}_{0},A\bm{r}_{0},\ldots,A^{m-1}\bm{r}_{0} \}.
\end{equation}
The corresponding residual $\bm{r}_{m}=\bm{b}-A\bm{x}_{m}$ is characterized by the minimal residual condition
\begin{equation*}
\norm{\bm{r}_{m}}_2 = \min_{\bm{x}\in\bm{x}_{0} + \mc{K}_{m}(A,\bm{r}_{0})} \norm{\bm{b}-A\bm{x}}_2.
\end{equation*}

The approximate solution is the sum of the vector $x_0$ and a linear combination of the orthonormal basis vectors of the Krylov subspace, 
which are generated by the Arnoldi algorithm. The first vector in the Arnoldi algorithm is the normalized initial residual $\bm{v}_{1}= \bm{r}_{0}/\beta$, where $\beta=\norm{\bm{r}_{0}}_2$, and subsequent vectors $\bm{v}_{k}$ are formed by orthogonalizing $A\bm{v}_{k-1}$ against all previous basis vectors. Collecting the basis vectors into the matrix
$V_m = [ v_1, \ldots, v_m]$, one can write the Arnoldi relation
\begin{equation}
AV_{m}=V_{m+1}\bar{H}_{m},
\label{eq:Arnoldi}
\end{equation}
where 
$\bar{H}_m\in\R^{m+1\times m}$ is an upper Hessenberg matrix whose entries are the orthogonalization coefficients.

Thus, solutions of the form $\bm{x}=\bm{x}_{0} + V_{m}\bm{y}_{m}$ are sought for some  $\bm{y}_{m}\in\R^{m}$. Using the Arnoldi relation, and the fact that $V_{m}$ is orthonormal, the GMRES minimization problem can be cast as the equivalent, smaller least-squares minimization problem
\begin{equation}
\norm{\bm{r}_{m}}_{2} = \min_{\bm{y}\in\R^{m}} \norm{ \beta \bm{e}_{1} - \bar{H}_{m}\bm{y}}_{2},
\label{eq:GMRESMin}
\end{equation}
with solution $y_m$, 
where $\bm{e}_{1} = [1,0,\ldots, 0]^{T}$ is the first standard basis vector in $\R^{m}$.

\subsection{MPGMRES}
The multipreconditioned GMRES method allows multiple preconditioners to be incorporated in the solution of a given linear system.\ The method differs from standard preconditioned Krylov subspace methods in that the approximate solution is found in a larger and richer search space~\cite{greifmpgmres}. This multipreconditioned search space grows at each iteration by applying each of the preconditioners to all current search directions. 
In building this search space some properties of both flexible and block Krylov subspace methods are used.
As with GMRES,
the MPGMRES algorithm then produces an approximate solution 
satisfying a minimal residual condition over this larger subspace. 

Computing the multi-Krylov basis is similar to computing the Arnoldi basis in GMRES but with a few key differences. Starting from an initial vector $\bm{x}_{0}$, corresponding initial residual $\bm{r}_0$, and setting $V^{(1)}=\bm{r}_{0}/\beta$, the first MPGMRES iterate $\bm{x}_{1}$ is found in 
\begin{equation*}
\bm{x}_{1} \in \bm{x}_{0} + \Span\{ P_{1}^{-1}\bm{r}_{0},\ldots,P_{n_p}^{-1}\bm{r}_{0}\},
\end{equation*}
such that the corresponding residual has minimum 2-norm over all vectors of this form. 
Equivalently, $\bm{x}_{1}=\bm{x}_{0}+Z^{(1)}\bm{y}_{1}$, where $Z^{(1)}=[P^{-1}_{1}\bm{r}_{0}\cdots P_{n_p}^{-1}\bm{r}_{0}]\in\R^{n\times n_p}$ 
and the vector $\bm{y}_{1}$ minimizes the residual. Note that the corresponding residual belongs to the space
\begin{equation*}
\bm{r}_{1} \in \bm{r}_{0}+ \Span\{AP_{1}^{-1}\bm{r}_{0},\ldots, AP_{n_p}^{-1}\bm{r}_{0}\}.
\end{equation*}
In other words, at the first iteration, the residual can be expressed as a first degree multivariate (non-commuting) matrix polynomial with arguments $AP_{j}^{-1}$ applied to the initial residual, i.e.,
\begin{equation*}
\bm{r}_{1}= \bm{r}_{0} + \sum_{j=1}^{n_p} \alpha_{j}^{(1)}AP_{j}^{-1}\bm{r}_{0}  \equiv { q_{1}(AP_{1}^{-1},\ldots,AP_{n_p}^{-1})}\bm{r}_{0},
\end{equation*} 
with the property that $q_{1}(0,\ldots,0) = 1$.

The next set of basis vectors $V^{(2)}$, with orthonormal columns, are generated by orthogonalizing the columns of $AZ^{(1)}$ against $V^{(1)}=\bm{r}_{0}/\beta$, and performing a thin QR factorization. The orthogonalization coefficients generated in this process are stored in matrices $H^{(j,1)}$, $j=1,2$. The space of search directions is increased by applying each preconditioner to this new matrix; i.e., $Z^{(2)}=[P_{1}^{-1}V^{(2)}\ldots P_{n_p}^{-1}V^{(2)}]\in\R^{n\times n_p^{2}}$. The general procedure at step $k$ is to block orthogonalize $AZ^{(k)}$ against the matrices $V^{(1)},\ldots,V^{(k)}$, and then performing a thin QR factorization to generate $V^{(k+1)}$ with orthonormal columns. By construction, the number of columns in $Z^{(k)}$, called search directions, and $V^{(k)}$, called basis vectors, is $n_p^{k}$.


Gathering all of the matrices, $V^{(k)}$, $Z^{(k)}$, and $H^{(j,i)}$ generated in the multipreconditioned Arnoldi procedure into larger matrices yields a decomposition of the form
\begin{equation}
A\mc{Z}_{m} = \mc{V}_{m+1}\bar{\mc{H}}_{m},
\label{eq:MPArnodliRelation}
\end{equation}
where
\begin{equation*}
\mc{Z}_{m}=
\begin{bmatrix}
Z^{(1)} & \cdots & Z^{(m)} \\
\end{bmatrix}
,
\qquad
\mc{V}_{m+1}
=
\begin{bmatrix}
V^{(1)} & \cdots & V^{(m+1)} \\
\end{bmatrix},
\end{equation*}
and
\begin{equation*}
\bar{\mc{H}}_{m}
=
\begin{bmatrix}
H^{(1,1)} & H^{(1,2)} & \cdots & H^{(1,m)} \\
H^{(2,1)} & H^{(2,2)} & & H^{(2,m)} \\
	& \ddots & &\\
	& & H^{(m,m-1)} & H^{(m,m)} \\
	& & & H^{(m+1,m)} \\
\end{bmatrix}\cdot
\end{equation*}
Similar to the standard Arnoldi method, the matrix $\bar{\mc{H}}_{m}$ is upper Hessenberg.  Introducing the constant
\begin{equation*}
\Sigma_{m} = \sum_{k=0}^{m} n_p^{k} = \frac{n_p^{m+1}-1}{n_p-1},
\end{equation*}
we have that
\begin{equation*}
\mc{Z}_{m}\in\R^{n\times (\Sigma_{m}-1)}, \quad \mc{V}_{m+1}\in\R^{n \times \Sigma_{m}}, \quad \bar{\mc{H}}_{m}\in\R^{\Sigma_{m}\times (\Sigma_{m}-1)}. 
\end{equation*}

The multipreconditioned search space is spanned by the columns of $\mc{Z}_{m}$ so that approximate 
solutions have the form $\bm{x}_{m} = \bm{x}_{0} + \mc{Z}_{m}\bm{y}$ for $\bm{y}\in \R^{\Sigma_{m}-1}$. 
Thus, thanks to \eqref{eq:MPArnodliRelation}, 
MPGMRES computes $y$ as the solution of the least squares problem
\begin{equation*}
\norm{\bm{r}_{m}}_{2} = \min_{\bm{y}\in\R^{\Sigma_{m}-1}}\Vert \beta\bm{e}_{1} - \bar{\mc{H}}_{m}\bm{y}\Vert_{2}~.
\end{equation*}

This is analogous to the GMRES minimization problem, except that here, a larger least squares problem is solved. Note that both $\mc{Z}_{m}$ and $\mc{V}_{m}$ are stored. The complete MPGMRES method consists of the multipreconditioned Arnoldi procedure coupled 
with the above least squares minimization problem. Furthermore, we highlight the expression of the MPGMRES residual as a 
multivariate polynomial, i.e., we can 
write $\bm{r}_{m}= q_{m}(AP_{1}^{-1},\ldots,AP_{n_p}^{-1})\bm{r}_{0}$, where $q_{m}(X_{1},\ldots,X_{n_p})\in\mathbb{P}_{m}$. 
Here $\mathbb{P}_{m}~\equiv~\mathbb{P}_{m}[X_1,\dots,X_{n_p}]$ is the space of non-commuting polynomials of degree $m$ in $n_p$ variables such that $q_{m}(0,\ldots,0)=1$.


In the above description of MPGMRES we have tacitly assumed that the matrix $\mc{Z}_{m}$ is of full rank. However, in creating the multipreconditioned search space, columns of $\mc{Z}_{m}$ may become linearly dependent. Strategies for detecting when such linear dependencies arise and then deflating the search space accordingly have been proposed. For the implementation details of the complete MPGMRES method, as well as information on selective variants for maintaining linear growth of the search space, we refer the reader to \cite{greifmpgmres,Greif.Rees.Szyld.14a}. 

\subsection{GMRES for shifted systems}
The solution of shifted systems using the GMRES method requires minor but important modifications. One key idea is to exploit the shift-invariant property of Krylov subspaces, i.e., $\mc{K}_{m}(A + \sigma I,\bm{b}) =\mc{K}_{m}(A,\bm{b})$, and generate a single approximation space from which all shifted solutions are computed. For shifted systems, the Arnoldi relation is
\begin{equation}
(A +\sigma I)V_{m} =  V_{m+1}\left(\bar{H}_{m} + \sigma\begin{bmatrix} I_m \\ 0 \end{bmatrix}\right) \equiv V_{m+1}\bar{H}_{m}(\sigma),
\label{eq:ShiftedArnoldi}
\end{equation}
where $I_{m}$ is the $m\times m$ identity matrix. The matrices $V_{m}$, $\bar{H}_{m}$ are the same as in \eqref{eq:Arnoldi} and are independent of the shift $\sigma$. 

Using (\ref{eq:ShiftedArnoldi}),
the equivalent shift-dependent GMRES minimization problem is
\begin{equation*}
\norm{ \bm{r}_{m}(\sigma)}_{2} = \min_{y\in\R^{m}} \norm{ \beta e_{1} - \bar{H}_{m}(\sigma)\bm{y}}_{2}~.
\end{equation*}
This smaller least squares problem is solved for each shift. Note that the computationally intensive step of generating the basis vectors $V_{m}$ is performed only once and the cheaper solution of the projected, smaller minimization problem occurs for each of the shifts. We remark that the GMRES initial vector is $\bm{x}_{0}=0$ for shifted systems since the initial residual must be shift independent. 

\subsection{FGMRES for shifted systems}
As previously mentioned, we consider using several shift-and-invert preconditioners in the solution of shifted systems. This is due to the fact that a single shift-and-invert preconditioner is ineffective for preconditioning over a large range of shift values $\sigma$, a key observation made in \cite{gu2007flexible,saibaba2013flexible}. Using FMGRES, one can cycle through and apply $P_{j}^{-1}$ for each value of $\tau_{j}$. 
Incorporating information from the different shifts into the approximation space improves convergence compared to GMRES with a single shifted preconditioner.

To build this approximation space the authors in \cite{gu2007flexible} use the fact that
\begin{equation}\label{eqn:ainv}
(A+\sigma I)(A + \tau I)^{-1} = I + (\sigma - \tau) (A + \tau I)^{-1},
\end{equation}
from which it follows that
\begin{equation}
\mc{K}_{m}((A+\sigma I)P_{\tau}^{-1},\bm{v})=\mc{K}_{m}(P_{\tau}^{-1},\bm{v}).
\label{eq:flexibleshift}
\end{equation}
Therefore, they propose building a Krylov subspace based on $P_{\tau}^{-1}$. Note that the Krylov subspace $\mc{K}_{m}(P_{\tau}^{-1},\bm{v})$ is independent of $\sigma$ and 
therefore each shifted system can be projected onto this approximation space. The flexible approach constructs a basis $Z_{m}$ of the approximation space, where each column $\bm{z}_{j}= P_{j}^{-1}\bm{v}_{j}$ corresponds to a different shift. This gives the following flexible, shifted Arnoldi relation
\begin{equation*}
(A+\sigma I)Z_{m} = (A+\sigma I)[P_{1}^{-1}\bm{v}_{1}\cdots P_{m}^{-1}\bm{v}_{m}] = V_{m+1}\left(\bar{H}_{m}(\sigma I_{m} - T_{m}) + \begin{bmatrix} I_m \\ 0 \end{bmatrix}\right),
\end{equation*}
where $T_{m}= \text{diag}(\tau_{1},\ldots,\tau_{m})$.

Although this approach is more effective than a single preconditioner, information from only a single shift is incorporated into the search space at each iteration and the order in which the preconditioners are applied can affect the performance of FGMRES. These potential deficiencies motivate the proposed multipreconditioned algorithm, which allows for information from all preconditioners (i.e., all shifts $\tau_1,\dots,\tau_m$) to be built into the approximation space at every iteration.
 \section{MPGMRES for shifted systems}
\label{sec:shiftedMPGMRES}

In this section, we present a modification of the MPGMRES algorithm to handle shifted systems with multiple shift-and-invert preconditioners,
where the relation~\eqref{eq:flexibleshift} plays a crucial role. 
The new algorithm is referred to as MPGMRES-Sh. We shall prove that the growth of the search space at each iteration is linear in the number of preconditioners thereby leading to an efficient algorithm. This is in contrast to the original MPGMRES algorithm where the dimension of the search search space grows exponentially in the number of preconditioners.   

The proposed implementation of MPGMRES for solving shifted systems \eqref{eq:General} with preconditioners \eqref{eq:ShiftandInvert} adapts the flexible strategy of \cite{saibaba2013flexible}, and using relation~\eqref{eq:flexibleshift} builds a shift-invariant multipreconditioned search space that is used to solve for each shift. 
We assume throughout that $(A+\tau_j I) $ is invertible for all $j=1,\dots,n_p$ so that the preconditioners $P_j^{-1} = (A+\tau_j I)^{-1}$ are well-defined.  
{The MPGMRES algorithm proceeds by applying all $n_p$ preconditioners to the columns of $V^{(k)}$, which results in}
\begin{equation} \label{eqn:Zk}
Z^{(k)} = [(A+\tau_1I)^{-1}V^{(k)},\ldots, (A+\tau_{n_p} I)^{-1}V^{(k)}] \in \mathbb{R}^{n\times n_p^{k} }, \quad V^{(k)} \in \mathbb{R}^{n\times n_p^{k-1}}.
\end{equation}
Rearranging this we obtain the relation
\begin{equation} 
AZ^{(k)} + Z^{(k)}T^{(k)}  =   V^{(k)}E^{(k)},   \quad E^{(k)} = \bm{e}_{n_{p}}^T \otimes I_{n_p^{k-1}},
\label{eqn:completempd}
\end{equation}
where $\bm{e}_{n_p}^{T} = {\underbrace{[1,\ldots,1]}_{n_p\ \text{times}}}^{T}$ and
\begin{equation}
T^{(k)} =   \text{block diag} \left\{ \underbrace{\tau_1,\dots,\tau_1}_{n_p^{k-1} \text{times} },\ldots, \underbrace{\tau_{n_p},\dots,\tau_{n_p}}_{n_p^{k-1} \text{times} }\right\}\in\R^{n_{p}^{k}\times n_{p}^{k}}. 
\label{eqn:shiftmatT}
\end{equation}
Concatenating the matrices $Z^{(k)}$ and $V^{(k)}$ for $ k=1,\dots,m$,
into $\mc{Z}_{m}$ and $\mc{V}_{m}$, we rewrite the $m$ equations in \eqref{eqn:completempd} into a matrix relation 
\begin{equation}
A\mathcal{Z}_{m} +\mathcal{Z}_mT_m   =   \mathcal{V}_mE_m,  
\label{eqn:ArnoldiRelation}
\end{equation}
where $T_m  =\text{block diag}\{ T^{(1)} ,\ldots,T^{(m)}\}$ and $E_m = \text{block diag}\left\{ E^{(1)},\ldots,E^{(m)}\right\}$.

To generate the next set of vectors $V^{(k+1)}$ we use a block Arnoldi relationship of the form
\[ V^{(k+1)}H^{(k+1,k)} = Z^{(k)} - \sum_{j=1}^k V^{(j)}H^{(j,k)},\]
in combination with~\eqref{eqn:Zk}, so that at the end of $m$ iterations, the flexible multipreconditioned Arnoldi relationship holds, i.e., $\mathcal{Z}_m = \mathcal{V}_{m+1}\bar{\mc{H}}_m$. In summary, we have the system of equations 
\begin{align}
\mathcal{Z}_m =  & \quad \mathcal{V}_{m+1}\bar{\mc{H}}_m  \label{eq:Arnoldi1},\\ 
\mathcal{Z}_mT_m =& \quad  \mathcal{V}_{m+1}\bar{\mc{H}}_m T_m, \label{eq:Arnoldi2}\\
A\mathcal{Z}_m + \mathcal{Z}_mT_m= & \quad \mathcal{V}_mE_m.  \label{eq:Arnoldi3}
\end{align}

\begin{remark}
From~\eqref{eqn:ainv}, it follows that $\text{Span}\{\mathcal{Z}_m\} =\text{Span}\{ (A+\sigma I)\mathcal{Z}_m\} $; see also~\eqref{eq:flexibleshift}. As a consequence, in Step~\ref{item:Zk} of Algorithm~\ref{alg:mpgmressh} we do not need to explicitly compute matrix-vector products with $A$. 
\end{remark}

Combining relations \eqref{eq:Arnoldi1}--\eqref{eq:Arnoldi3}, we obtain the following flexible 
multipreconditioned Arnoldi relationship for each shift $\sigma$:
\begin{equation}
(A + \sigma I)\mathcal{Z}_m = \mathcal{V}_{m+1} {\left( \begin{bmatrix} E_m \\ 0 \end{bmatrix} + \bar{\mc{H}}_m(\sigma I - T_m) \right) } \equiv{ \mc{V}_{m+1}\bar{\mc{H}}_m(\sigma;T_m)}.
\label{eqn:ArnoldiIdentity}
\end{equation}
Finally, searching for approximate solutions of the form $\bm{x}_m = \mathcal{Z}_m\bm{y}_m$ (recall that $x_0 = 0$) and using the minimum residual condition we have the following minimization problem for each shift:
\begin{align}
\nonumber
\Vert \bm{r}_{m}(\sigma) \Vert_{2} &= \min_{\bm{x}\in \Span\{\mc{Z}_{m}\} } \Vert \bm{b}-(A+\sigma I)\bm{x}_{m} \Vert _{2} 
 = \min_{\bm{y}\in \R^{\Sigma_m-1}}\Vert \bm{b} - (A+\sigma I)\mc{Z}_{m}\bm{y} \Vert_{2} \\
&= \min_{\bm{y}\in \R^{\Sigma_m-1}} \Vert \mc{V}_{m+1}(\beta \bm{e}_{1} - \bar{\mc{H}}_{m}(\sigma;T_{m})\bm{y}) \Vert_{2} 
= \min_{\bm{y}\in \R^{\Sigma_m-1}} \Vert \beta\bm{e}_{1} - \bar{\mc{H}}_{m}(\sigma;T_{m})\bm{y} \Vert_{2}. 
\label{eq:minsigma}
\end{align} 
The application of the multipreconditioned Arnoldi method using each preconditioner $P_{j}^{-1}$ in conjunction with the 
solution of the above minimization problem is the complete MPGMRES-Sh method, see Algorithm \ref{alg:mpgmressh}.

\begin{algorithm}[h!]
\begin{algorithmic}[1]
\REQUIRE 
Matrix $A$, right-hand side $\bm{b}$, preconditioners $\{A +\tau_j I\}_{j=1}^{n_p}$, shifts $\{\sigma_j\}_{j=1}^{n_\sigma}$, $\{\tau_j\}_{j=1}^{n_p}$, and number of iterations $m$.
\STATE Compute $\beta = \norm{\bm{b}}_{2}$ and $V^{(1)} =  \bm{b}/\beta$.
\FOR {$k=1,\dots,m$ }  \label{line:begin}
\STATE $Z^{(k)} = [P_{1}^{-1}V^{(k)},\ldots,P_{n_p}^{-1}V^{(k)}]$  \label{line:end}
\STATE $W= Z^{(k)}$ \label{item:Zk}
\FOR {j = 1,\dots,k}
\STATE $H^{(j,k)} = (V^{(j)})^TW$
\STATE $W = W - V^{(j)}H^{(j,k)}$
\ENDFOR
\STATE $W =  V^{(k+1)}H^{(k+1,k)}$ \  \COMMENT{thin QR factorization}
\ENDFOR
\FOR {$j=1,\dots,n_\sigma$}
\STATE Compute $\bm{y}_m(\sigma_{j}) = \argmin_\bm{y} \norm{\beta \bm{e}_1 - \bar{\mc{H}}_m(\sigma_j;T_m)\bm{y} }$ for each shift. \label{line:min}
\STATE $\bm{x}_{m}(\sigma_{j}) = \mc{Z}_{m}\bm{y}_{m}(\sigma_{j})$, where $\mc{Z}_{m} = [Z^{(1)}, \cdots, Z^{(m)}]$ 
\ENDFOR
\RETURN The approximate solution $x_m(\sigma_j)$ for $j=1,\dots,n_\sigma$. 
\end{algorithmic}
\caption{Complete MPGMRES-Sh}
\label{alg:mpgmressh}
\end{algorithm}


\subsection{Growth of the search space}
It can be readily seen  in Algorithm~\ref{alg:mpgmressh} that the number of columns of $\mc{Z}_m$ and $\mc{V}_m$ grows exponentially. 
However, as we shall prove below, with the use of shift-and-invert preconditioners the dimension of this space grows only linearly. 

Noting that $r_m(\sigma) \in \Span\{\mc{V}_{m+1}\}$, the residuals produced by the MPGMRES-Sh method are of the form
\begin{equation}
\bm{r}_{m} (\sigma) \in q_{m}(P_{1}^{-1},\ldots,P_{n_p}^{-1})\bm{b},
\label{eq:mpresidual}
\end{equation}
for a polynomial $q_{m}\in\mathbb{P}_{m}$. Recall that $\mathbb{P}_{m}$ is the space of multivariate polynomials of degree $m$ in 
$n_p$ (non-commuting) variables such that $q_{m}(0,\ldots,0)=1$. Note that this polynomial is independent of the shifts $\sigma_j$. 

To illustrate this more clearly, for the case of $n_p=2$ preconditioners, the first two residuals are of the form
\begin{align*}
\bm{r}_{1} (\sigma) &\in \bm{b} + \alpha^{(1)}_{1}P_{1}^{-1}\bm{b} + \alpha^{(1)}_{2}P_{2}^{-1}\bm{b}, \\
\bm{r}_{2}(\sigma) &\in \bm{b} + \alpha^{(2)}_{1}P_{1}^{-1}\bm{b} + \alpha^{(2)}_{2}P_{2}^{-1}\bm{b} + \alpha_{3}^{(2)}(P_{1}^{-1})^{2}\bm{b} + \alpha_{4}^{(2)}P_{1}^{-1}P_{2}^{-1}\bm{b} \\ 
& ~~~+ \alpha_{5}^{(2)}P_{2}^{-1}P_{1}^{-1}\bm{b} + \alpha_{6}^{(2)}(P_{2}^{-1})^{2}\bm{b} .
\end{align*}

The crucial point we subsequently prove is that the cross-product terms of the form $P_{i}^{-1}P_{j}^{-1}\bm{v}\in \Span\{P_{i}^{-1}\bm{v},P_{j}^{-1}\bm{v}\}$ for $i\neq j$. In particular, only the terms that are purely powers of the form $P_{i}^{-k}\bm{v}$ need to be computed. This allows $\Span\{\mathcal{Z}_m\}$ to be expressed as the sum of Krylov subspaces generated by 
individual shift-and-invert preconditioners $P_i^{-1}$; cf.\ \eqref{eq:flexibleshift}. 
Thus, the search space has only linear growth in the number of preconditioners. 

For ease of demonstration, we  first prove this for $n_p=2$ preconditioners. 
To prove the theorem we need the following two lemmas. 


\begin{lemma}
Let $P_{1}^{-1},P_{2}^{-1}$ be shift-and-invert preconditioners as defined in \eqref{eq:ShiftandInvert} with $\tau_{1}\neq\tau_{2}$. Then $P_{1}^{-1}P_{2}^{-1}\bm{v}\in\Span\{P_{1}^{-1}\bm{v},P_{2}^{-1}\bm{v}\}$. 
\label{lem:CrossProduct}
\end{lemma}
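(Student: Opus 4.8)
The plan is to exploit the resolvent identity that already appears in the paper as equation~\eqref{eqn:ainv}. The key algebraic fact is that for any scalar $\tau$ and any vector $\bm{w}$, we have $(A+\tau_1 I)^{-1}\bm{w} - (A+\tau_2 I)^{-1}\bm{w} = (\tau_2-\tau_1)(A+\tau_1 I)^{-1}(A+\tau_2 I)^{-1}\bm{w}$, which is just the first resolvent identity. Applying this with $\bm{w}=\bm{v}$ immediately gives $P_1^{-1}P_2^{-1}\bm{v} = \frac{1}{\tau_2-\tau_1}\bigl(P_1^{-1}\bm{v} - P_2^{-1}\bm{v}\bigr)$, which is manifestly in $\Span\{P_1^{-1}\bm{v}, P_2^{-1}\bm{v}\}$ and uses the hypothesis $\tau_1\neq\tau_2$ precisely to divide by $\tau_2-\tau_1$.

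Concretely, the steps I would carry out are: first, write $A+\tau_2 I = (A+\tau_1 I) + (\tau_2-\tau_1)I$; second, left-multiply by $P_1^{-1}=(A+\tau_1 I)^{-1}$ and right-multiply by $P_2^{-1}=(A+\tau_2 I)^{-1}$ to obtain $P_1^{-1} = P_2^{-1} + (\tau_2-\tau_1)P_1^{-1}P_2^{-1}$; third, rearrange to solve for $P_1^{-1}P_2^{-1}$, using $\tau_1\neq\tau_2$; fourth, apply both sides to $\bm{v}$ and read off the membership in the two-dimensional span. One can equally well start from~\eqref{eqn:ainv} with $\sigma=\tau_1$, $\tau=\tau_2$ and divide by $(A+\tau_1 I)$, but the symmetric derivation above is cleanest.

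I do not anticipate any real obstacle here: the lemma is a one-line consequence of the resolvent identity, and the only subtlety is the harmless one of needing $\tau_1\neq\tau_2$ so that the scalar coefficient $1/(\tau_2-\tau_1)$ makes sense — which is exactly the stated hypothesis. The one thing worth flagging for the broader argument (though not for this lemma itself) is that commuting the preconditioners is legitimate because $(A+\tau_1 I)$ and $(A+\tau_2 I)$ commute, so $P_1^{-1}P_2^{-1}=P_2^{-1}P_1^{-1}$; this observation will be what lets the subsequent theorem collapse all mixed products $P_{i_1}^{-1}\cdots P_{i_k}^{-1}\bm{v}$ down to spans of pure powers, but for Lemma~\ref{lem:CrossProduct} as stated we only need the single identity.
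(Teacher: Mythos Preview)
Your proposal is correct and essentially matches the paper's proof: both arrive at the identity $P_1^{-1}P_2^{-1}\bm{v} = \frac{1}{\tau_2-\tau_1}\bigl(P_1^{-1}\bm{v} - P_2^{-1}\bm{v}\bigr)$, with the paper obtaining the coefficients $\gamma_1 = 1/(\tau_2-\tau_1) = -\gamma_2$ by positing the linear combination and matching coefficients after multiplying through by $P_1$ and $P_2$, whereas you derive the same coefficients constructively via the resolvent identity. The content is identical; only the order of presentation differs.
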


\emph{Proof.}
We show there exist constants $\gamma_{1},\gamma_{2}$ such that
\begin{equation}
P_{1}^{-1}P_{2}^{-1}\bm{v} = \gamma_{1}P_{1}^{-1}\bm{v} + \gamma_{2}P_{2}^{-1}\bm{v} = P_{2}^{-1}P_{1}^{-1}\bm{v}.
\label{eq:constants}
\end{equation}
Setting $\bm{v}=P_{2}\bm{w}$ and left multiplying by $P_{1}$ gives the following equivalent formulation
\begin{align*}
\bm{w} &= \gamma_{1} P_{2}\bm{w} + \gamma_{2}P_{1}\bm{w}
   = \gamma_{1} (A+\tau_{2}I)\bm{w} + \gamma_{2} (A+\tau_{1}I)\bm{w} \\
   &= (\gamma_{1}+\gamma_{2})A\bm{w} + (\gamma_{1}\tau_{2} + \gamma_{2}\tau_{1})\bm{w}.
\end{align*}
By equating coefficients we obtain that \eqref{eq:constants} holds for
\begin{equation*}
\gamma_{1}= \frac{1}{\tau_{2}-\tau_{1}} = -\gamma_{2}. ~~~\cvd 
\end{equation*}
\begin{remark}
Note that $P_1^{-1}P_2^{-1} = P_2^{-1}P_1^{-1}$, i.e., the preconditioners commute even though we have not assumed that $A$ is diagonalizable. We make use of this observation repeatedly.
\end{remark}
\begin{lemma}
Let $P_{1}^{-1},P_{2}^{-1}$ be shift-and-invert preconditioners as defined in \eqref{eq:ShiftandInvert} with $\tau_{1}\neq\tau_{2}$, then \[P_{1}^{-m}P_{2}^{-n}\bm{v} \in \mc{K}_{m}(P_{1}^{-1},P_{1}^{-1}\bm{v}) + \mc{K}_{n}(P_{2}^{-1},P_{2}^{-1}\bm{v}).\]
\label{lem:CrossProductMN}
\end{lemma}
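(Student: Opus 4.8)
The plan is to prove Lemma~\ref{lem:CrossProductMN} by induction, using Lemma~\ref{lem:CrossProduct} and the commutativity remark as the base case and the main engine. The key observation is that $P_1^{-1}$ and $P_2^{-1}$ commute (by the remark following Lemma~\ref{lem:CrossProduct}), so $P_1^{-m}P_2^{-n}\bm{v}$ is unambiguous, and the two Krylov subspaces on the right-hand side, $\mc{K}_m(P_1^{-1},P_1^{-1}\bm{v})$ and $\mc{K}_n(P_2^{-1},P_2^{-1}\bm{v})$, are each invariant under their respective preconditioner (up to raising the degree by one). I would first record the simple fact, immediate from Lemma~\ref{lem:CrossProduct} applied with $\bm{v}$ replaced by an arbitrary vector, that $P_1^{-1}P_2^{-1}\bm{w} \in \Span\{P_1^{-1}\bm{w}, P_2^{-1}\bm{w}\}$ for every $\bm{w}$; this is the single-cross-term case $m=n=1$.

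Next I would set up a double induction, or more cleanly an induction on $m+n$. For the base case $m+n \le 2$ the statement is either trivial ($m=0$ or $n=0$) or is exactly Lemma~\ref{lem:CrossProduct}. For the inductive step, write $P_1^{-m}P_2^{-n}\bm{v} = P_1^{-(m-1)} \bigl( P_1^{-1}P_2^{-1} \bigr) P_2^{-(n-1)}\bm{v}$, and apply Lemma~\ref{lem:CrossProduct} to the middle factor acting on the vector $\bm{w} = P_2^{-(n-1)}\bm{v}$ (using commutativity to move factors around freely). This gives
\[
P_1^{-m}P_2^{-n}\bm{v} = \gamma_1 P_1^{-m}P_2^{-(n-1)}\bm{v} + \gamma_2 P_1^{-(m-1)}P_2^{-n}\bm{v},
\]
with $\gamma_1 = -\gamma_2 = 1/(\tau_2-\tau_1)$. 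Each of the two terms on the right now has the sum of exponents reduced by one, so by the inductive hypothesis each lies in $\mc{K}_m(P_1^{-1},P_1^{-1}\bm{v}) + \mc{K}_{n}(P_2^{-1},P_2^{-1}\bm{v})$ — here I would note that $\mc{K}_{n-1}(P_2^{-1},P_2^{-1}\bm{v}) \subseteq \mc{K}_n(P_2^{-1},P_2^{-1}\bm{v})$ and similarly in the first variable, so the containing space in the inductive hypothesis can be taken to be the one in the lemma statement with the larger indices $m$ and $n$. Since a sum of two Krylov subspaces is itself a subspace, the linear combination stays inside it, closing the induction.

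The main obstacle — really the only subtlety — is bookkeeping the Krylov-subspace indices correctly: one must make sure that reducing the exponent on $P_1^{-1}$ from $m$ to $m-1$ lands in $\mc{K}_{m-1}(P_1^{-1},P_1^{-1}\bm{v})$, which is contained in $\mc{K}_m(P_1^{-1},P_1^{-1}\bm{v})$, and that the pure-power term $P_1^{-m}\bm{v}$ (arising when $n-1 = 0$) indeed sits in $\mc{K}_m(P_1^{-1},P_1^{-1}\bm{v})$ by the definition of that Krylov space as $\Span\{P_1^{-1}\bm{v}, P_1^{-2}\bm{v},\ldots,P_1^{-m}\bm{v}\}$. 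Once the index conventions are pinned down, the argument is a routine telescoping induction driven entirely by Lemma~\ref{lem:CrossProduct} and commutativity; no new idea beyond those is needed.
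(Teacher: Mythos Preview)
Your proposal is correct and follows essentially the same approach as the paper: both arguments are inductions that use Lemma~\ref{lem:CrossProduct} (together with the commutativity of $P_1^{-1}$ and $P_2^{-1}$) to peel off one factor of $P_1^{-1}P_2^{-1}$ and reduce to a lower-degree instance. The only difference is organizational---you induct on $m+n$ and reduce $(m,n)$ to the pair $(m,n-1)$ and $(m-1,n)$, whereas the paper inducts diagonally from $(m,n)$ to $(m+1,n+1)$; your version is in fact a bit tidier, since it covers all index pairs without further comment.
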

\emph{Proof.}
We proceed by induction. The base case when $m=n=1$ is true by Lemma~\ref{lem:CrossProduct}. Assume that the induction hypothesis is true 
for 
$ 1 \leq j \leq m$, 
$ 1 \leq k \leq n$, 
that is, we assume there exist coefficients such that
\begin{equation*}
P_{1}^{-m}P_{2}^{-n}\bm{v} = \sum_{j=1}^{m}\alpha_{j}P_{1}^{-j}\bm{v} + \sum_{j=1}^{n}\alpha_{j}'P_{2}^{-j}\bm{v}.
\end{equation*}
Now consider
\begin{align*}
 P_{1}^{-(m+1)}P_{2}^{-(n+1)}\bm{v} &= (P_{1}^{-1}P_{2}^{-1})(P_{1}^{-m}P_{2}^{-n}\bm{v}) \\
&= \left(\gamma_1 P_1^{-1} + \gamma_2 P_2^{-1} \right) \left(\sum_{j=1}^{m}\alpha_{j}P_{1}^{-j}\bm{v} + \sum_{j=1}^{n}\alpha_{j}'P_{2}^{-j}\bm{v}\right) \\ 
&= \sum_{j=2}^{m+1}\gamma_1{\alpha}_{j-1} P_{1}^{-j}\bm{v} + \sum_{j=2}^{n+1}\alpha_{j-1}'\gamma_2P_{2}^{-j}\bm{v} \\
& \qquad 
+  \sum_{j=1}^{m}\alpha_{j}\gamma_2P_2^{-1}P_{1}^{-j}\bm{v} + \sum_{j=1}^{n}\gamma_1\alpha_{j}'P_1^{-1}P_{2}^{-j}\bm{v} \\
&= \sum_{j=1}^{m+1} \tilde{\alpha}_{j} P_{1}^{-1}\bm{v} + \sum_{j=1}^{n+1} \tilde{\alpha}_{j}' P_{2}^{-1}\bm{v}. \\
\end{align*}

\vspace*{-6mm}

The first equality follows from the commutativity of $P_{1}^{-1}$ and $P_{2}^{-1}$. 
The second equality is the induction hypothesis. 
The third equality is just a result of the distributive property.  The final equality results from applications of Lemma \ref{lem:CrossProduct} and then expanding and gathering like terms. 
It can be readily verified that every term in this expression for $P_{1}^{-(m+1)}P_{2}^{-(n+1)}\bm{v} $ belongs to $\mc{K}_{m+1}(P_{1}^{-1},P_{1}^{-1}\bm{v}) + \mc{K}_{n+1}(P_{2}^{-1},P_{2}^{-1}\bm{v})$.~\cvd
\begin{theorem}
Let $P_{1}^{-1},P_{2}^{-1}$ be shift-and-invert preconditioners as defined in \eqref{eq:ShiftandInvert} with $\tau_{1}\neq\tau_{2}$. At the $m^{\text{th}}$ step of the \text{MPGMRES-Sh} algorithm we have
\begin{equation}
\bm{r}_{m} (\sigma) =  q^{(1)}_{m}(P_{1}^{-1})\bm{b} + q^{(2)}_{m}(P_{2}^{-1})\bm{b},
\end{equation}
where $q^{(1)}_{m},q^{(2)}_{m}\in\mathbb{P}_{m}[X]$ are such that $q^{(1)}_{m}(0) + q^{(2)}_{m}(0)=1$. 
That is, the residual is expressed as the sum of two (single-variate) polynomials of degree $m$
on the appropriate preconditioner, applied to $\bm{b}$. 
\label{thm:LinearGrowth}
\end{theorem}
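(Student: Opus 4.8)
The plan is to start from the polynomial characterization of the MPGMRES-Sh residual recorded in~\eqref{eq:mpresidual}, namely $\bm{r}_m(\sigma)\in q_m(P_1^{-1},P_2^{-1})\bm{b}$ with $q_m\in\mathbb{P}_m$, and to collapse this bivariate non-commuting polynomial into a sum of two univariate polynomials by repeated use of Lemmas~\ref{lem:CrossProduct} and~\ref{lem:CrossProductMN}. The space $\mathbb{P}_m$ is spanned by words in the two symbols of length at most $m$, with the empty word carrying the coefficient $q_m(0,0)=1$. By the remark following Lemma~\ref{lem:CrossProduct}, $P_1^{-1}$ and $P_2^{-1}$ commute, so evaluating any length-$\ell$ word at $(P_1^{-1},P_2^{-1})$ and applying it to $\bm{b}$ produces a vector $P_1^{-a}P_2^{-b}\bm{b}$ with $a+b=\ell\le m$.

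Next I would classify these terms by whether an exponent vanishes. If $b=0$ the term is $P_1^{-a}\bm{b}\in\mc{K}_m(P_1^{-1},P_1^{-1}\bm{b})$, and symmetrically for $a=0$; the empty word ($a=b=0$) contributes $\bm{b}$ itself. If $a,b\ge 1$, Lemma~\ref{lem:CrossProductMN} with $\bm{v}=\bm{b}$ puts the term in $\mc{K}_a(P_1^{-1},P_1^{-1}\bm{b})+\mc{K}_b(P_2^{-1},P_2^{-1}\bm{b})\subseteq\mc{K}_m(P_1^{-1},P_1^{-1}\bm{b})+\mc{K}_m(P_2^{-1},P_2^{-1}\bm{b})$, the inclusion holding because $a,b\le m$. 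Summing over all words gives $\bm{r}_m(\sigma)\in\bm{b}+\mc{K}_m(P_1^{-1},P_1^{-1}\bm{b})+\mc{K}_m(P_2^{-1},P_2^{-1}\bm{b})$, i.e.\ $\bm{r}_m(\sigma)=\bm{b}+\sum_{j=1}^m a_jP_1^{-j}\bm{b}+\sum_{k=1}^m b_kP_2^{-k}\bm{b}$ for suitable scalars. Splitting the constant as $q_m^{(1)}(X)=c_1+\sum_{j=1}^m a_jX^j$ and $q_m^{(2)}(X)=c_2+\sum_{k=1}^m b_kX^k$ with $c_1+c_2=1$ yields two polynomials in $\mathbb{P}_m[X]$ with $q_m^{(1)}(0)+q_m^{(2)}(0)=1$ and $\bm{r}_m(\sigma)=q_m^{(1)}(P_1^{-1})\bm{b}+q_m^{(2)}(P_2^{-1})\bm{b}$, as claimed.

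An essentially equivalent route is to prove by induction on $m$ that $\Span\{\mc{Z}_m\}=\mc{K}_m(P_1^{-1},P_1^{-1}\bm{b})+\mc{K}_m(P_2^{-1},P_2^{-1}\bm{b})$, the induction step being precisely where Lemma~\ref{lem:CrossProductMN} enters, and then to combine $\bm{r}_m(\sigma)\in\bm{b}+(A+\sigma I)\Span\{\mc{Z}_m\}$ with the remark following~\eqref{eq:Arnoldi3} that $(A+\sigma I)\Span\{\mc{Z}_m\}=\Span\{\mc{Z}_m\}$. I do not anticipate a genuine obstacle; the only points requiring care are the bookkeeping that the total degree stays at most $m$ after commuting the symbols and invoking Lemma~\ref{lem:CrossProductMN}, and keeping track that the empty-word coefficient equals exactly $1$, so that the normalization $q_m^{(1)}(0)+q_m^{(2)}(0)=1$ is inherited correctly.
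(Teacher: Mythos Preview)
Your proposal is correct and follows essentially the same route as the paper: start from the multivariate polynomial representation~\eqref{eq:mpresidual}, then use Lemma~\ref{lem:CrossProductMN} to collapse every cross term into a sum of pure powers of $P_1^{-1}$ and $P_2^{-1}$, and gather like terms. You are simply more explicit than the paper about two bookkeeping points it leaves implicit---invoking commutativity first to reduce each word to the form $P_1^{-a}P_2^{-b}$, and tracking that $a+b\le m$ so the resulting univariate degrees stay at most $m$---and about splitting the constant term to meet the normalization $q_m^{(1)}(0)+q_m^{(2)}(0)=1$.
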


\begin{proof}
Recall that at step $m$ of the MPGMRES-Sh algorithm the residual can be expressed as the multivariate polynomial (cf.\ \eqref{eq:mpresidual})
\begin{equation} \label{rmsigma}
\bm{r}_{m} (\sigma)=  q_{m}(P_{1}^{-1},P_{2}^{-1})\bm{b},
\end{equation}
with $q_{m}\in\mathbb{P}_{m}[X_1,X_2]$ and $q_{m}(0,0)=1$. Thus, we need only show that the multi-variate polynomial $q_{m}$ can be expressed as the sum of two polynomials in $P_{1}^{-1}$ and $P_{2}^{-1}$. 
By Lemma \ref{lem:CrossProductMN}, any cross-product term involving $P_{1}^{-j}P_{2}^{-\ell}$ can be expressed as a linear combination of powers of only $P_{1}^{-1}$ or $P_{2}^{-1}$. 
Therefore, gathering like terms we can express \eqref{rmsigma} as
\begin{align*}
\bm{r}_m (\sigma) &= \> \sum_{j=1}^{m}\alpha_{j}P_{1}^{-j}\bm{b} + \sum_{\ell=1}^{m}\alpha_{\ell}'P_{2}^{-\ell}\bm{b} 
=\>  q^{(1)}_{m}(P_{1}^{-1})\bm{b} + q^{(2)}_{m}(P_{2}^{-1})\bm{b} ,
\end{align*}
where $q_{m}^{(i)}\in\mathbb{P}_{m}[X]$ and $q_{m}^{(1)}(0) + q_{m}^{(2)}(0) = 1$.
\end{proof}

For the general case of $n_p >1$, we have the same result, as stated below. Its proof is given in Appendix~\ref{appendixA}.
\begin{theorem} \label{gen.theo}
Let $\{P_{j}^{-1}\}_{j=1}^{n_p}$ be shift-and-invert
preconditioners as defined in \eqref{eq:ShiftandInvert}, where
$\tau_{j}\neq\tau_{i}$ for $j\neq i$. At the $m^{\text{th}}$
step of the MPGMRES-Sh algorithm we have
\begin{equation*}
\bm{r}_{m}(\sigma) =  \sum_{j=1}^{n_p} q_{m}^{(j)}(P_{j}^{-1})\bm{b},
\end{equation*}
where $q_{m}^{(j)}\in \mathbb{P}_m[X]$ and
$\sum_{j=1}^{n_p}q_{m}^{(j)}(0)=1$. In other words, the residual
is a sum of $n_p$ single-variate polynomials of degree $m$ on the appropriate preconditioner, applied to~$\bm{b}$. 
\end{theorem}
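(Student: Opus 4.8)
The plan is to generalize the two-preconditioner argument (Lemmas~\ref{lem:CrossProduct}, \ref{lem:CrossProductMN}, and Theorem~\ref{thm:LinearGrowth}) to $n_p$ preconditioners by induction. The key structural fact to establish first is a pairwise partial-fraction identity: for any $i \neq j$, since $\tau_i \neq \tau_j$, we have $P_i^{-1}P_j^{-1}\bm{v} \in \Span\{P_i^{-1}\bm{v}, P_j^{-1}\bm{v}\}$, with the same constants $\gamma = 1/(\tau_j - \tau_i) = -\gamma'$ as in Lemma~\ref{lem:CrossProduct}; the proof is verbatim the one given there. In particular all the $P_i^{-1}$ mutually commute. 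This is the workhorse that lets any product of inverse-shifted factors be ``flattened.''

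Next I would prove the generalization of Lemma~\ref{lem:CrossProductMN}: for any multi-index $(k_1,\dots,k_{n_p})$ with total degree $m = \sum_\ell k_\ell$,
\[
P_1^{-k_1}P_2^{-k_2}\cdots P_{n_p}^{-k_{n_p}}\bm{v} \;\in\; \sum_{\ell=1}^{n_p} \mc{K}_{k_\ell}(P_\ell^{-1}, P_\ell^{-1}\bm{v}).
\]
The cleanest route is induction on the number $s$ of distinct preconditioners actually appearing in the product (those with $k_\ell \geq 1$). The base case $s=1$ is trivial, and $s=2$ is exactly Lemma~\ref{lem:CrossProductMN}. For the inductive step, isolate one factor, say $P_{n_p}^{-1}$, write the product as $P_{n_p}^{-1}\bigl(P_1^{-k_1}\cdots P_{n_p}^{-(k_{n_p}-1)}\bm{v}\bigr)$ when $k_{n_p}\geq 1$; by the inductive hypothesis on total degree the inner vector lies in $\sum_\ell \mc{K}_{k'_\ell}(P_\ell^{-1},P_\ell^{-1}\bm{v})$, so it is a linear combination of terms $P_\ell^{-t}\bm{v}$ with $1\leq t\leq k'_\ell$. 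Applying $P_{n_p}^{-1}$ to each such term gives either $P_{n_p}^{-(t+1)}\bm{v}$ (already a pure power in the right Krylov space, with degree within budget) or $P_{n_p}^{-1}P_\ell^{-t}\bm{v}$ with $\ell \neq n_p$; for the latter I would apply the pairwise lemma to rewrite $P_{n_p}^{-1}P_\ell^{-1}$ and then re-invoke the induction hypothesis (now on a product of strictly smaller total degree) to push everything down to pure powers of $P_{n_p}^{-1}$ and $P_\ell^{-1}$. Careful bookkeeping shows the degrees never exceed the allotted $k_\ell$, which is the crucial point. This mirrors the third-equality/final-equality manipulation in the proof of Lemma~\ref{lem:CrossProductMN}.

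With this generalized lemma in hand, the theorem follows exactly as Theorem~\ref{thm:LinearGrowth} did: by~\eqref{eq:mpresidual} the residual is $\bm{r}_m(\sigma) = q_m(P_1^{-1},\dots,P_{n_p}^{-1})\bm{b}$ for some non-commuting polynomial $q_m \in \mathbb{P}_m$ with $q_m(0,\dots,0)=1$. Expanding $q_m$ into monomials, each monomial of positive degree is (after using commutativity) a product $P_1^{-k_1}\cdots P_{n_p}^{-k_{n_p}}$ with $\sum k_\ell \leq m$, hence by the generalized lemma it lies in $\sum_\ell \mc{K}_{k_\ell}(P_\ell^{-1},P_\ell^{-1}\bm{b}) \subseteq \sum_\ell \mc{K}_m(P_\ell^{-1},P_\ell^{-1}\bm{b})$. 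Gathering like terms across all monomials collapses the expression to $\sum_{j=1}^{n_p} q_m^{(j)}(P_j^{-1})\bm{b}$ with each $q_m^{(j)} \in \mathbb{P}_m[X]$; the constant term of $q_m$ distributes among the $q_m^{(j)}(0)$ so that $\sum_j q_m^{(j)}(0) = q_m(0,\dots,0) = 1$.

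The main obstacle is the degree-bookkeeping in the inductive step of the generalized lemma: when the pairwise identity $P_i^{-1}P_\ell^{-1}\bm{v} = \gamma P_i^{-1}\bm{v} + \gamma' P_\ell^{-1}\bm{v}$ is applied inside a larger product, one must verify that re-flattening never produces a pure power $P_\ell^{-t}$ with $t$ exceeding the budget $k_\ell$ coming from the original multi-index. Choosing the right induction variable (number of distinct factors, with total degree as a secondary measure) and being explicit that applying $P_{n_p}^{-1}$ to a degree-$t$ pure power either increments that same power or triggers a strictly-smaller sub-problem is what makes the accounting close. Everything else is a direct, if notation-heavy, transcription of the $n_p = 2$ arguments.
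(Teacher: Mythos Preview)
Your proposal is correct and follows the same underlying idea as the paper---repeatedly apply the pairwise partial-fraction identity $P_i^{-1}P_j^{-1}=\gamma P_i^{-1}+\gamma' P_j^{-1}$ and induct---but the two proofs are organized differently.  The paper first proves a separate lemma (Lemma~\ref{lem:CPgen}) showing that the product of \emph{all} $n_p$ single inverses, $\prod_{j=1}^{n_p}P_j^{-1}$, already lies in $\Span\{P_1^{-1}v,\dots,P_{n_p}^{-1}v\}$ (induction on $n_p$), and then uses this to bootstrap to the equal-power case $\prod_{j=1}^{n_p}P_j^{-m}$ (Lemma~\ref{lem:CPMNgen}, induction on $m$); the general multi-index case needed in the theorem is then asserted ``by repeated application of Lemma~\ref{lem:CrossProductMN}''.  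You instead go straight for the general multi-index statement $P_1^{-k_1}\cdots P_{n_p}^{-k_{n_p}}v\in\sum_\ell\mc{K}_{k_\ell}(P_\ell^{-1},P_\ell^{-1}v)$, which is more directly what the theorem needs and yields the sharper per-factor degree bound throughout.  The paper's route isolates Lemma~\ref{lem:CPgen} as a clean standalone identity; yours avoids that detour at the cost of a slightly heavier single induction.

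One small clarification: your induction on the number $s$ of distinct factors does not quite stand alone, since peeling off one copy of $P_{n_p}^{-1}$ need not decrease $s$.  A cleaner choice is strong induction on the total degree $m=\sum_\ell k_\ell$ alone: after peeling off $P_{n_p}^{-1}$ and applying the hypothesis to the degree-$(m{-}1)$ inner product, each resulting cross term $P_{n_p}^{-1}P_\ell^{-t}v$ with $\ell\neq n_p$ is handled directly by the two-preconditioner Lemma~\ref{lem:CrossProductMN}, which already gives membership in $\mc{K}_1(P_{n_p}^{-1},P_{n_p}^{-1}v)+\mc{K}_t(P_\ell^{-1},P_\ell^{-1}v)$ and makes the degree bookkeeping immediate (since $1\le k_{n_p}$ and $t\le k_\ell$).
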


\begin{remark}\label{r_main}
The residual can be equivalently expressed in terms of the Krylov subspaces generated by the preconditioners 
$P_j^{-1}$ and right hand side $b$: 
\[ \bm{r}_{m} (\sigma)  \in  b + \mc{K}_{m}(P_{1}^{-1},P_{1}^{-1}\bm{b}) + \cdots + \mc{K}_{m}(P_{n_p}^{-1},P_{n_p}^{-1}\bm{b}). \]
Thus, at each iteration, the cross-product terms do not add to the search space and the dimension of the search space grows linearly
in the number
of preconditioners.
\end{remark}

\subsection{Implementation details}
As a result of Theorem \ref{gen.theo}, the MPGMRES-Sh search space can be built more efficiently than the original complete \break MPGMRES method. Although both methods compute the same search space, the complete MPGMRES  method as described in Algorithm~\ref{alg:mpgmressh} builds a search space $\mc{Z}_m$ with 
$\frac{n_p^m-n_p}{n_p-1}$ columns. However, by Theorem~\ref{gen.theo}, 
$\text{dim}\left(\Span\{\mc{Z}_m\}\right) = m n_p$, i.e., the search space has only linear growth 
in the number of iterations and number of preconditioners. 
Thus, an appropriate implementation of complete MPGMRES-Sh applied to our
case would include deflating the linearly dependent search space; 
for example, using a strong rank-revealing QR factorization as was done in~\cite{greifmpgmres}. 
This process would entail unnecessary computations, namely, in
the number of applications of the preconditioners, as 
well as in the orthonormalization and deflation. 

These additional costs can be avoided by directly building the linearly growing search space. 
Implementing this strategy only requires appending to the matrix of search directions
$\mc{Z}_{k-1}$, $n_p$ orthogonal columns spanning  ${\ran}(Z^{(k)})$, the range of
$Z^{(k)}$. Thus, only
$n_p$ independent searh directions are needed. It follows from Theorem~\ref{gen.theo} that 
any set of $n_p$ independent vectors in  
${\ran}(\mc{Z}_{k}) \setminus {\ran}(\mc{Z}_{k-1})$ suffice. We generate these vectors by applying each
of the $n_p$ preconditioners to the last column of $V^{(k)}$.

In other words, we replace step 3 in Algorithm~\ref{alg:mpgmressh}
with 
\begin{equation} \label{step3prime}
\mbox{\footnotesize 3':} ~~~~~~~~~~~~~~ Z^{(k)} = [P_{1}^{-1}\hat{v}_k,\ldots,P_{n_p}^{-1}\hat{v}_k], \mbox{\rm ~ where ~}  \hat{v}_k =  V^{(k)}e_{n_p} ,
~~~~~~~~~~~~~~~~~~
\end{equation}
and, where $e_{n_p}\in \R^{ n_p\times 1}$ is the last column of 
the $n_p \times n_p $ identity matrix $I_{n_p} $. Note that this implies that in line~\ref{line:min} the approximate solutions have the form $\bm{x}_m(\sigma) = \mathcal{Z}_m\bm{y}_m(\sigma)$ and the corresponding residual minimization problem is
\begin{equation}\label{e_sel_min}
\Vert \bm{r}_{m}(\sigma) \Vert_{2} =  \min_{\bm{y}\in \R^{mn_p}} \Vert \beta\bm{e}_{1} - \bar{\mc{H}}_{m}(\sigma;T_{m})\bm{y} \Vert_{2}. 
\end{equation} 

\begin{remark}
Algorithm~\ref{alg:mpgmressh} with step 3' as above is essentially what is called {\em selective MPGMRES} in \cite{greifmpgmres}. The big difference here is that this ``selective" version
captures the whole original search space (by Theorem~\ref{gen.theo}), while in \cite{greifmpgmres} only
a subspace of the whole search space is chosen.
\end{remark}

This version of the algorithm is precisely the one we use in our numerical experiments.  
From \eqref{step3prime} it can be seen that we need $n_p$ solves with a preconditioner per iteration,
and that at the $k^{\text{th}}$ iteration, one needs to perform $\left(k - \frac{1}{2}\right) n_p^{2} + \frac{3}{2} n_p$ inner
products for the orthogonalization.
This is in contrast to FGMRES-Sh where only one solve with a preconditioner per iteration is needed
and only $k+1$ innere products at the $k^{\text{th}}$ iteration.
Nevertheless, as we shall see in sections~\ref{sec:Hydrology} and \ref{sec:MatFun}, MPGMRES-Sh
achieves convergence in fewer iterations and lower computational times. The storage and computational cost of
the MPGMRES-Sh algorithm is comparable to that of a block Krylov
method. The preconditioner solves can also be parallelized
across multiple cores, which further reduces the computational cost.

\subsection{Extension to more general shifted systems}
\label{sec:ExtensiontoK}
The proposed MPGMRES-Sh method, namely Algorithm~\ref{alg:mpgmressh} with step 3' as in \eqref{step3prime},
can be easily adapted for solving more general shifted systems of the form
\begin{equation}
(K+\sigma_{j} M)\bm{x}(\sigma) = \bm{b}, \quad j=1,\ldots,n_\sigma,
\label{eq:GeneralK}
\end{equation}
with shift-and-invert preconditioners of the form $P_{j}=(K+\tau_{j}M)^{-1}$, $j=1,\ldots, n_p$. 

Using the identity
\begin{equation}
(K+\sigma M)P_{\tau}^{-1}= (K + \sigma M)(K+\tau M)^{-1} = I + (\sigma-\tau)MP_{\tau}^{-1},
\label{eqn:generalidentity}
\end{equation}
the same multipreconditioned approach described for the case $M =I$
can be applied. The difference for this more general case is that the multipreconditioned search space is now based on $MP_{\tau}^{-1}$.

In this case, analogous to \eqref{eqn:Zk}, we have 
\begin{equation*} 
Z^{(k)} = [(K+\tau_1M)^{-1}V^{(k)},\ldots, (K+\tau_t M)^{-1}V^{(k)}] \in \mathbb{R}^{n\times n_p^{k} }, \quad V^{(k)} \in \mathbb{R}^{n\times n_p^{k-1}},
\end{equation*}
which is equivalently expressed as
\begin{equation} 
KZ^{(k)} + MZ^{(k)}T^{(k)}  =   V^{(k)}E^{(k)}.
\label{eqn:completempdM}
\end{equation}
Using \eqref{eqn:completempdM} and concatenating $Z^{(k)}$, $V^{(k)}$, $T^{(k)}$, and $E^{(k)}$ into matrices we obtain the matrix relation
\begin{equation}
K\mc{Z}_{m} + M\mc{Z}_{m}T_{m} = \mc{V}_{m}E_{m},
\label{eqn:appendixarnoldi}
\end{equation}
where $T_m$ and $E_m$ are defined as in \eqref{eqn:ArnoldiRelation}.

Note that by \eqref{eqn:generalidentity}, the multipreconditioned Arnoldi relationship holds, i.e., $M\mathcal{Z}_m = \mathcal{V}_{m+1}\bar{\mc{H}}_m$, which in conjunction with \eqref{eqn:appendixarnoldi} gives the general version of~\eqref{eqn:ArnoldiIdentity}:
\begin{equation*}
(K + \sigma M)\mathcal{Z}_m = \mathcal{V}_{m+1}{\left( \begin{bmatrix} E_m \\ 0 \end{bmatrix} + \bar{\mc{H}}_m(\sigma I - T_m) \right)} \equiv \mc{V}_{m+1}{ \bar{\mathcal{H}}_m(\sigma;T_m)}.
\end{equation*}
As before, the approximate solutions have the form $\bm{x}_m(\sigma) = \mathcal{Z}_m\bm{y}_m(\sigma)$ and the corresponding residual minimization problem is~\eqref{e_sel_min}. It follows from \eqref{eqn:generalidentity} that only a basis for the space $\Span\{M\mathcal{Z}_m\}$ needs to be computed. This is due to the shift-invariant property \[\Span\{M\mathcal{Z}_m \} = \Span\{(K+\sigma M)\mathcal{Z}_m\}.\]
Thus, all we need to do is to use preconditioners of the form $P_j^{-1} = (K+\tau_j M)^{-1}$ in the input to Algorithm~\ref{alg:mpgmressh} and replace step 4 in Algorithm~\ref{alg:mpgmressh} with
\begin{equation} \label{step4prime}
\mbox{\footnotesize 4':} ~~~~ ~~~~~~~~~	 ~~ ~~ ~ ~ W= MZ^{(k)} .
~~~~~~~~~~~~~~~~~~~~~~ ~~~~~~~~~~~~~~~~~~~
~~~~~~~~~~~~~~~~~~~~~~ ~~~~~~~~~~~~~~~~~~~
\end{equation}

We remark that the analysis and results of Theorem \ref{gen.theo} remain valid for these more general shifted systems. To see this, consider the transformation of \eqref{eq:GeneralK} to $(KM^{-1} + \sigma I)x_M(\sigma) = b$ by the change of variables $x_M(\sigma) = Mx(\sigma)$. Note that this transformation is only valid when $M$ is invertible. As a result, we have a linear system of the form~\eqref{eq:General} and all the previous results are applicable here. The analysis can be extended to the case when $M$ is not invertible; however, this is outside the scope of this paper and is not considered here. For completeness, the resulting algorithm is summarized in Algorithm~\ref{alg:mpgmressh_sel}, which can be found in Appendix~\ref{app_gen}. 

\subsection{Stopping criteria}\label{ss_stop} We discuss here the stopping criteria used to test the convergence of the solver. As was suggested by Paige and Saunders~\cite{paige1982lsqr}, we consider the stopping criterion 
\begin{equation}
\norm{r_m(\sigma)} \>\leq\> \text{btol}\cdot\norm{b} + \text{atol}\cdot\norm{A + \sigma I}\norm{x_m(\sigma)}.
\end{equation}
We found this criterion gave better performance on highly ill-conditioned matrices as compared to the standard stopping criterion obtained by setting atol $=0$. We can evaluate $\norm{x_m(\sigma)}$ using the relation $\norm{x_m(\sigma)} = \norm{\mathcal{Z}_my_m(\sigma)}$. While we do not have access to $\norm{A + \sigma I}$, we propose estimating it as follows
\begin{equation} \label{eqn:normest}
 \norm{A + \sigma I} \approx \max_{k=1,\dots,mn_p} |\lambda_k| \qquad \mc{H}_m(\sigma;T_m)z_k = \lambda_k \mc{H}_mz_k.
\end{equation}
The reasoning behind this approximation is that the solution of the generalized eigenvalue problem $\mc{H}_m(\sigma;T_m)z_k = \lambda_k \mc{H}_mz_k$ are Harmonic Ritz eigenpairs, i.e., they satisfy \[(A+\sigma I)u - \theta u \perp \text{Span}\{\mc{V}_m\}, \qquad u \in \text{Span}\{\mc{V}_{m+1}\bar{\mc{H}}_m\};\]  
 and therefore, can be considered to be approximate eigenvalues of $A+\sigma I$. The proof is a straightforward extension of~\cite[Proposition 1]{saibaba2013flexible}. Numerical experiments confirm that MPGMRES-Sh with the above stopping criterion does indeed converge to an acceptable solution and is especially beneficial for highly ill-conditioned matrices $A$. 

Another modification to the stopping criterion needs to be made to account for inexact preconditioner solves. Following the theory developed in~\cite{simoncini2003theory}, a simple modification was proposed in~\cite{saibaba2013flexible}. Computing the true residual would require an extra application of a matrix-vector product with $A$. Typically, this additional cost is avoided using the Arnoldi relationship and the orthogonality of $V_m$ to compute the residual. 
However, when inexact preconditioners are used, the computed search space is a perturbation of the desired search space and the residual can only be computed approximately. Assuming that the application of the inexact preconditioners has a relative accuracy $\varepsilon$, based on~\cite{saibaba2013flexible}, we use the modified stopping criterion
\begin{equation}
\norm{r_m(\sigma)} \leq \text{btol}\cdot\norm{b} + \text{atol}\cdot\norm{A + \sigma I}\norm{x_m(\sigma)} + \varepsilon\cdot\| y_m(\sigma)\|_1,
\end{equation}
where $y_m(\sigma)$ is the solution of \eqref{eq:minsigma}, i.e., step 12 of our algorithm, 
namely Algorithm~\ref{alg:mpgmressh} with step 3' as in~\eqref{step3prime} (or Algorithm~\ref{alg:mpgmressh_sel} with $K =A$ and $M=I$).

\section{Application to Hydrology}
\label{sec:Hydrology}
\subsection{Background and motivation}

Imaging the subsurface of the earth is an important challenge in many
hydrological applications such as groundwater remediation and the
location of natural resources. Oscillatory hydraulic tomography
is a method of imaging that uses periodic pumping tests to
estimate important aquifer parameters, such as specific storage
and conductivity. Periodic pumping signals are imposed at
pumping wells and the transmitted effects are measured at
several observation locations. The collected data is then used
to yield a reconstruction of the hydrogeological parameters of
interest. The inverse problem can be tackled using the geostatistical approach; for details of this
application see~\cite{saibaba2013flexible}. 
A major challenge in solving these inverse problems
using the geostatistical approach is the cost of constructing the Jacobian, which represents the sensitivity
of the measurements to the unknown parameters.  In~\cite{saibaba2013flexible}, it is shown that constructing the Jacobian  requires repeated solution to a sequence of shifted systems.  An efficient solver for the forward problem can drastically reduce the overall computational
time required to solve the resulting inverse problem. In this section, we
demonstrate the performance of MPGMRES-Sh for solving the forward
problem with a periodic pumping source.

The equations governing groundwater flow through an aquifer for
a given domain $\Omega$ with boundary $\partial \Omega =
\partial \Omega_D \cup \partial \Omega_N \cup \Omega_W$
are given by
\begin{align} \label{eqn:timedomain}
S_s(x) \frac{\partial \phi(x,t)}{\partial t} - \nabla \cdot
\left(K(x) \nabla \phi(x,t)\right) & = q(x,t) & x &\in \Omega,
\\ \nonumber
\phi(x,t) & = 0 & x & \in \partial \Omega_D,  \\ \nonumber 
\nabla \phi(x,t) \cdot n & = 0 & x &\in \partial \Omega_N \\
\nonumber
K(x) \left(\nabla \phi(x,t)\cdot n\right) & = S_y \frac{\partial
\phi(x,t)}{\partial t} & x &\in \partial \Omega_W \\ \nonumber
\end{align}
where $S_s(x)$ (units of L$^{-1}$) represents the specific storage, $S_y$
(dimensionless) represents the specific yield and $K (x)$ (units of $L/T$) represents
the hydraulic conductivity. The boundaries $\partial \Omega_D,
\partial \Omega_N$, and $\partial \Omega_W$ represent Dirichlet,
Neumann, and the linearized water table boundaries, respectively.
In the case of one source oscillating at a fixed frequency
$\omega$ (units of radians/T) , $q(x,t)$ is given by
\begin{equation} \label{eq:oscillation} \nonumber
 q(x,t) = Q_0\delta(x-x_s) \cos(\omega t) .
 \end{equation}
We assume the source to be a point source oscillating at a known
frequency $\omega$ and peak amplitude $Q_0$ at the source
location $x_s$. Since the solution is linear in time, we assume
the solution (after some initial time has passed) can be
represented as
\begin{equation} \label{eqn:measurementequation} \nonumber
 \phi(x,t) = \Re(\Phi(x) \exp(i\omega t) ),
\end{equation}
where $\Re(\cdot)$ is the real part and $\Phi(x)$, known as
the phasor, is a function of space only that contains information
about the phase and amplitude of the signal. Assuming this form of the
solution, the equations~\eqref{eqn:timedomain} in the so-called phasor
domain are
\begin{align} \label{eqn:phasor1}
- \nabla \cdot \left(K(x) \nabla \Phi(x)\right) + i\omega S_s(x)
\Phi(x) = & \quad Q_0\delta(x-x_s) &\qquad x &\in \Omega, \\
\nonumber
\Phi(x) = & \quad 0 &\quad x &\in \partial \Omega_D, \\
\nonumber
\nabla \Phi(x) \cdot n = & \quad 0 &\qquad x &\in \partial
\Omega_N, \\ \nonumber
K(x) \left(\nabla \Phi(x) \cdot n\right) = &\quad i \omega S_y
\Phi(x) &\qquad x &\in \partial \Omega_W.
\end{align}
The differential equation~\eqref{eqn:phasor1} along with the
boundary conditions are discretized using standard finite elements implemented through the FEniCS software package~\cite{LoggMardalEtAl2012a, LoggWells2010a, LoggWellsEtAl2012a}. Solving the discretized equations for several
frequencies results in solving systems of shifted equations of the form
\begin{equation}
 \label{eqn:genshifted}
\left( K + \sigma_j M \right) x_j  = b \qquad j=1,\dots,n_\sigma,
\end{equation}
where, $K$ and $M$ are the stiffness and mass matrices,
respectively. The relevant MPGMRES-Sh algorithm for
this system of equations 
is the one  described in 
Section~\ref{sec:ExtensiontoK}, and summarized in Algorithm~\ref{alg:mpgmressh_sel}.

\subsection{Numerical examples}
\label{numer:sec}

In this section, we consider two synthetic aquifers in our test
problems. In both examples, the equations are discretized using
standard linear finite elements implemented using FEniCS
and Python as the user-interface. The first example is a two-dimensional depth-averaged aquifer in a
rectangular domain discretized using a regular grid. The second
test problem is a challenging $3$-D synthetic example
chosen to reflect the properties observed at the Boise
Hydrogeological Research Site (BHRS) \cite{cardiff2013hydraulic}; see Figure
\ref{fig:boise}. Although the domain is regular, modeling the
pumping well accurately requires the use of an unstructured grid. The numerical experiments were
 all performed on an HP workstation with 16 core Intel Xeon E5-2687W (3.1 GHZ) processor running Ubuntu 14.04. The machine has $128$ GB RAM and 1 TB hard disk space.

\begin{figure}[!ht]
\centering
\includegraphics[scale = 0.37]{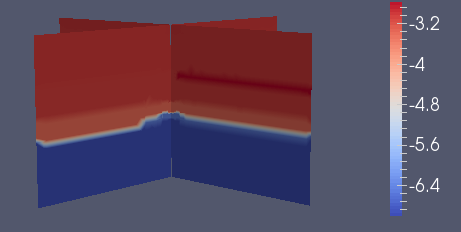}
\qquad
\includegraphics[scale =
0.269]{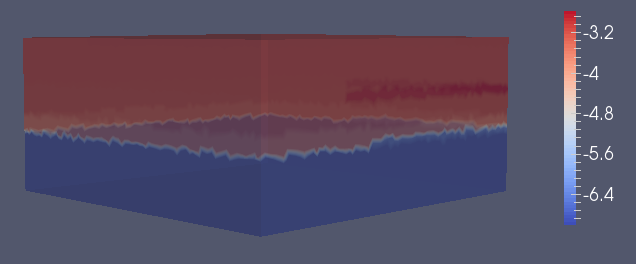}
\caption{Two views of the log conductivity field of the synthetic example. The
pumping well is located at the center of the domain.}
\label{fig:boise}
\end{figure}

\begin{figure}[!ht]
\centering
\includegraphics[scale = 0.4]{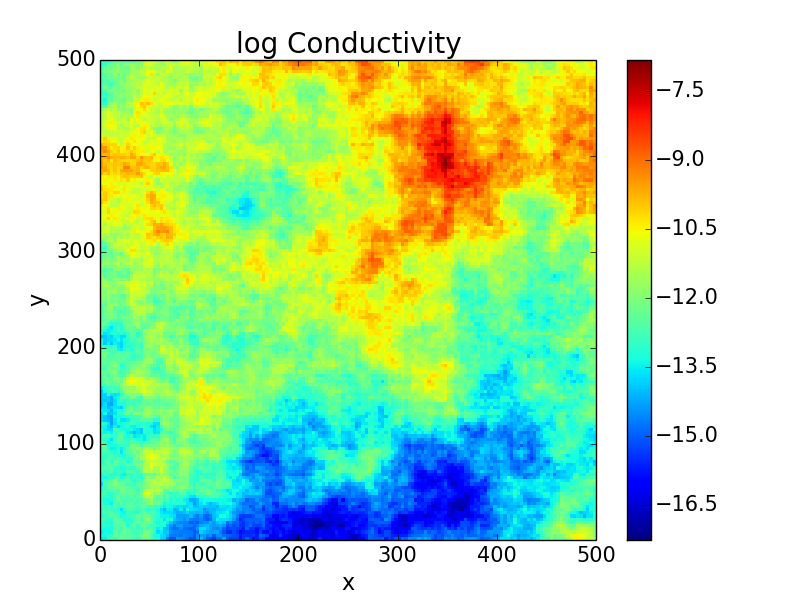}
\caption{A realization of the log conductivity field drawn from
$\mathcal{N}(\mu(x), \kappa(x,y))$ used in Test Problem 1. The parameter $\mu$ is specified in Table~\ref{tab:parameters} and $\kappa(x,y)$ is given by~\eqref{eqn:kernel}. The dimension of the system is $22801$.}
\label{fig:twodcond}
\end{figure}

\subsubsection{Test Problem 1} We consider a two-dimensional
aquifer in a rectangular domain with Dirichlet boundary
conditions on all boundaries. The log conductivity is chosen to
be a random field generated from the Gaussian process
$\mathcal{N}(\mu(x), \kappa(x,y))$, where $\kappa(\cdot,\cdot)$
is an exponential kernel taking the form
\begin{equation}\label{eqn:kernel} \kappa(x,y) = 4 \exp \left( -
\frac{2\normtwo{ x - y}}{L}\right), \end{equation} with $\mu(x)$ the mean of the log-conductivity chosen to be
constant and $L$ the length of the aquifer domain. Other
parameters for the model are summarized in Table
\ref{tab:parameters}; see Figure \ref{fig:twodcond} for the particular
realization of the conductivity field. As mentioned earlier, the
bottleneck for the large-scale reconstruction of parameters is
the repeated solution of the shifted system of
equations~\eqref{eqn:genshifted}. To choose realistic parameters
following~\cite{saibaba2013flexible}, $200$ frequencies evenly spaced in the range $\omega
\in [\frac{2\pi}{600},\frac{2\pi}{3}]$ were considered, which results in $200$
shifted systems of dimension $22801$.

\begin{table}[h] 
 \centering
\caption{Parameters chosen for test
problem}\label{tab:parameters}
\begin{tabular}{llc}

Definition &  Parameters  & Values \\
 \hline 
Aquifer length&  L (m) & 500 \\
Specific storage &  $\log S_s$ (m$^{-1}$) &  $-11.52$\\
Mean conductivity  &  $\mu(\log K)$ (m/s) & $ -11.52$ \\
Variance of conductivity   & $\sigma^2(\log K) $ & $2.79$  \\
Frequency range &$\omega$ ($s^{-1}$) &
$[\frac{2\pi}{600},\frac{2\pi}{3}]$ \\
\end{tabular}
\end{table}

We compare the MPGMRES-Sh algorithm proposed here with several
other solvers. We consider the `Direct' approach, which
corresponds to solving the system for each frequency using a
direct solver. Since this requires factorizing a matrix for each
frequency, this is expensive. `GMRES-Sh' refers to using
preconditioned GMRES to solve the shifted system of equations
using a single shift-and-invert preconditioner, with the frequency chosen
to be the midpoint of the range of frequencies. While this is
not the optimal choice of preconditioner, it is a representative example for illustrating that one preconditioner does not effectively precondition all systems. We also provide a 
comparison with `FGMRES-Sh'. Following~\cite{saibaba2013flexible}, we choose preconditioners of
the form $K + \tau_k M$ for $k = 1, ..., m$, where $m$ is the
maximum dimension of the Arnoldi iteration. Let $\bar{\tau} = \{
\bar{\tau}_1, \dots, \bar{\tau}_{n_p}\}$ be the list of values
that $\tau_k$ can take with $n_p$ denoting the number of
distinct preconditioners used. In \cite{saibaba2013flexible}, it is shown that systems with frequencies nearer to the origin converge
slower, so we choose the values of $\bar{\tau}$ to be evenly spaced in a log
scale in the frequency range $\omega \in
[\frac{2\pi}{600},\frac{2\pi}{3}]$. For FGMRES-Sh with
$n_p$ preconditioners we assign the first $m/n_p$ values of
$\tau_k$ as $\bar{\tau}_1$, the next $m/n_p$ values of $\tau_k$
to $\bar{\tau}_2$ and so on. If the algorithm has not converged
in $m$ iterations, we cycle over the same set of preconditioners. `MPGMRES-Sh' uses the same
preconditioners as `FGMRES-Sh' but builds a different search
space. We set the size of the bases per preconditioner to be $5$, i.e., $m/n_p = 5$. A relative residual of less than $10^{-10}$ was chosen as
the stopping criterion for all the iterative solvers, i.e., atol was set to~$0$.
Furthermore, all the preconditioner solves were performed using
direct solvers and can be treated as exact in the absence of
round-off errors.

\begin{figure}[!htbp]
\centering
\includegraphics[scale = 0.4]{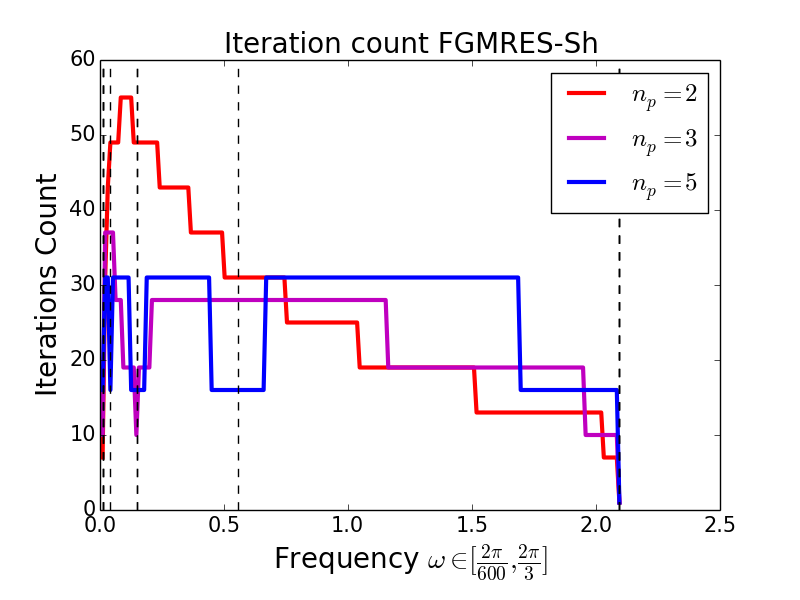}\\
\includegraphics[scale = 0.4]{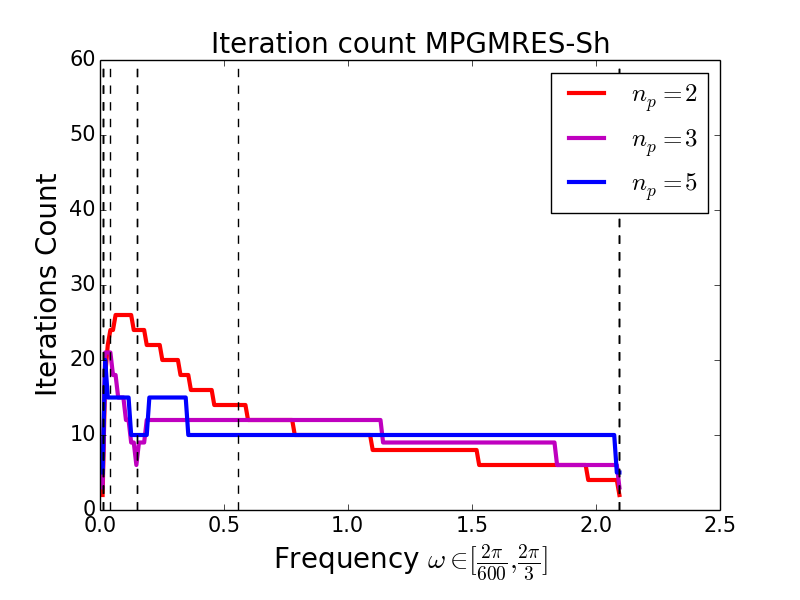}\\
\includegraphics[scale = 0.4]{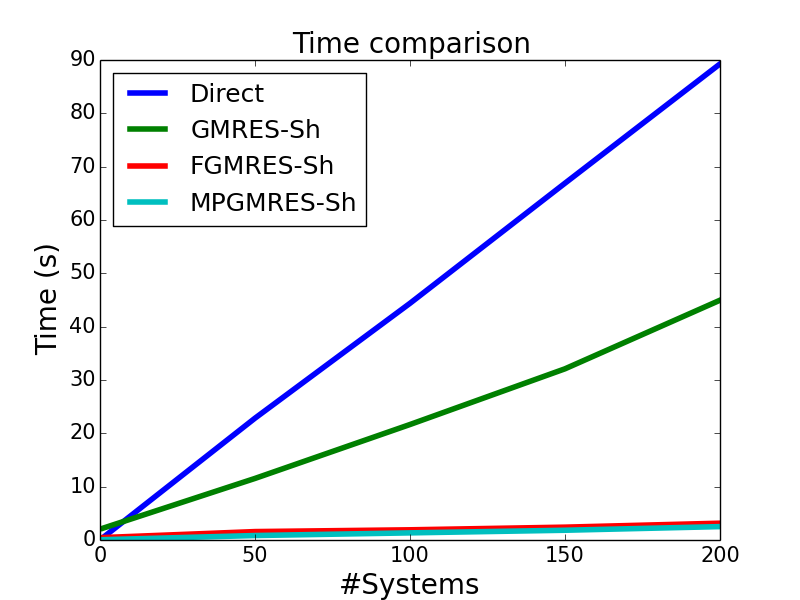}
\caption{  Iteration counts comparing  FGMRES-Sh (top) to MPGMRES-Sh (center) across the frequency range - the number of preconditioners are varied from $2$ to $5$. The system size is $22801$. (bottom) Time comparison (in seconds) between different solvers - Direct, GMRES-Sh, FGMRES-Sh and MPGMRES-Sh (Algorithm~\ref{alg:mpgmressh_sel}).} 
\label{fig:twodresults}
\end{figure}

We report the results of the various aforementioned solvers
in Figure~\ref{fig:twodresults}.  In the top two plots, we
compare  MPGMRES-Sh with FGMRES-Sh and we
see that the iteration count to reach the desired tolerance is
less with MPGMRES-Sh than with the use of FGMRES-Sh. The
vertical lines on the plot denote the preconditioners chosen in
the case when $n_p = 5$. In the bottom plot,
we compare the solution times (in seconds) of the solvers to solve the $200$
shifted systems. We observe that the `Direct' method is
computationally prohibitive because the growth is linear with the
number of shifted systems. GMRES-Sh uses the shift-invariance
property but the convergence with each frequency is different
and the number of iterations for some frequencies can be quite
large. Both FMRES-Sh and MPGMRES-Sh appear to behave independently
of the number of shifted systems to be solved and MPGMRES-Sh
converges marginally faster.

\subsubsection{Test Problem 2}\label{sec:real} We now compare
the solution time for a realistic 3-D problem on an unconfined aquifer. The aquifer is of  size $60 \times 60 \times 27$ m$^3$ and the
pumping well is located in the center with the pumping occurring
over a $1$m interval starting $2$m below the water table. For
the free surface boundary (top), we use the linearized water table
boundary $\partial \Omega_W$;  for the other boundaries we use a no-flow boundary
for the bottom surface $\partial \Omega_N$ and Dirichlet boundary conditions for the
side walls $\partial\Omega_D$. The dimension of the sparse matrices $K$ and $M$ is $132089$ with
approximately $1.9$ million nonzero entries each (both matrices have the same sparsity pattern). The number of
shifts $\sigma_j$ chosen is $100$ with the periods evenly spaced in range
from $10$ seconds to $15$ minutes, resulting in $\omega \in [\frac{2\pi}{900}, \frac{2\pi}{10}]$.
Since the `Direct' and `GMRES-Sh' approaches are infeasible on
problems of this size, we do not provide comparisons with them.
We focus on studying the iteration counts and time taken by
FGMRES-Sh and MPGMRES-Sh. The choice of preconditioners and the
construction of basis for FGMRES-Sh and MPGMRES-Sh are the same
as that for the previous problem. The number of preconditioners
$n_p$ is varied in the range $2$--$5$. However, one crucial difference is
that the preconditioner solves are now done using an iterative method,
specifically,  preconditioned CG with an algebraic multigrid (AMG) solver as a preconditioner available
through the PyAMG package~\cite{bell2011pyamg}. Following~\cite{saibaba2013flexible}, the stopping
criterion used for the preconditioner solve required the
relative residual to be less than $10^{-12}$. The number of
iterations and the CPU time taken by FGMRES-Sh and MPGMRES-Sh
have been displayed in Table~\ref{table:iters}; as can be seen,
MPGMRES-Sh has the edge over FGMRES-Sh both in iteration counts
and CPU times. In Figure~\ref{fig:iters} we report on the number of iterations for
the different shifted systems.

\begin{table}[!ht]
\begin{center}
\caption{Comparison of MPGMRES-Sh with FGMRES-Sh on Test Problem 2.\label{table:iters} }
\begin{tabular}{c|c c|c c}
\multirow{2}{*}{$n_p$} & \multicolumn{2}{|c|}{FGMRES-Sh} &
\multicolumn{2}{c}{MPGMRES-Sh} \\ \cline{2-5}
& Matvecs & CPU Time [s] & Matvecs & CPU Time [s]\\ \hline
$2$ & $58$ & $160.3$ & $36$ & $87.0$  \\ 
$3$ & $52$ & $130.3$ & $24$ & $52.1$ \\
$5$ & $44$&  $104.2$ & $20$ & $36.7$\\ 
\end{tabular}
\end{center}
\end{table}

\begin{figure}[!ht]
\centering
\includegraphics[scale = 0.45]{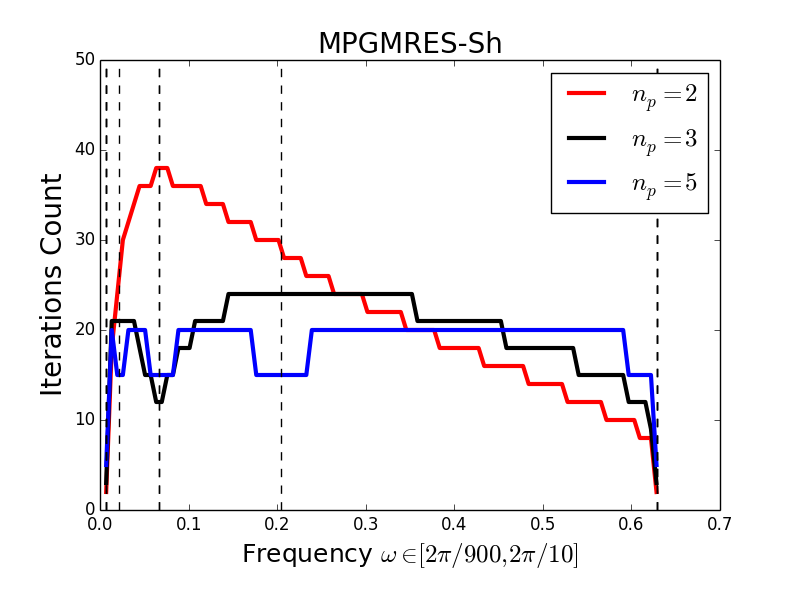}
\caption{Variation of iteration count with increasing number of
preconditioners in MPGMRES-Sh (Algorithm~\ref{alg:mpgmressh_sel}) for Test Problem 2.  The preconditioners are chosen on a log scale and their locations are highlighted in dashed lines. } 
\label{fig:iters}
\end{figure}

\section{Matrix Functions}
\label{sec:MatFun}
Matrix function evaluations are relevant in many applications;
for example, the evaluation of $\exp(-tA)b$ is important in the
time-integration of large-scale dynamical
systems~\cite{bakhos2015fast}. In the field of statistics and
uncertainty quantification, several computations involving a
symmetric positive definite covariance matrix $A$ can be
expressed in terms of matrix functions. For example, evaluation
of $A^{1/2}\xi$ can be used to sample from $\mathcal{N}(0,A)$
where $\xi \sim\mathcal{N}(0,I)$ as was implemented
in~\cite{saibaba2012efficient}, and an unbiased estimator to
$\log\left(\det(A)\right)$ can be constructed as
\[ \log\left(\det(A)\right) = \text{trace}\left(\log(A)\right) \approx
\frac{1}{n_s}\sum_{k=1}^{n_s} \zeta_k^T\log(A)\zeta_k,\]
where $\zeta_k$ is drawn i.i.d.\ from a Rademacher or Gaussian
distribution~\cite{saibaba2015fast}. The evaluation of the
matrix function can be carried out by representing the function
as a contour integral
\begin{equation*}
f(A) = \frac{1}{2\pi i}\int_\Gamma  {f(z)}(zI-A)^{-1} dz ,
\end{equation*}
where $\Gamma$ is a closed contour lying in the region of
analyticity of $f$. The matrix $A$ is assumed to be positive
definite and we consider functions $f$ that are analytic except
for singularities or a branch cut on (or near) $(-\infty,0]$. We
consider the approach in~\cite{hale2008computing} that uses a
conformal map combined with the trapezoidal rule to achieve an
exponentially convergent quadrature rule as the number of
quadrature nodes $N\rightarrow \infty$. The evaluation of the
matrix function $f(A)b$ can then be approximated by the sum
\begin{equation}\label{eqn:trapz}
f(A)b \quad \approx \quad f_N(A)b \quad = \quad \sum_{j=1}^{N}
w_j (z_jI - A)^{-1}b,
\end{equation}
where $w_k$ and $z_k$ are quadrature weights and nodes. The
convergence of the approximation as $N\rightarrow \infty$ is
given by an expression of the type
\[ \norm{f(A)-f_N(A)} \> = \> \mathcal{O}\left( e^{-C_1\pi^2
N/(\log(M/m) + C_2)} \right),\]
where $C_1$ and $C_2$ are two constants depending on the
particular method used and $m,M$ are the smallest and largest
eigenvalue of $A$, respectively~\cite{hale2008computing}. Given a
tolerance $\epsilon$, we choose $N$ according to the formula
\[ N = \left\lceil\frac{(\log(M/m) + C_2)}{C_1\pi^2 \epsilon}
\right\rceil.\]
In our application, we choose $\epsilon = 10^{-6}$, so that the
expression~\eqref{eqn:trapz} requires the solution
of shifted system of equations $(z_j I - A)x_j = b$ for
$j=1,\dots,N$ and can be computed efficiently using the
MPGMRES-Sh algorithm. Note that $N$ depends on the function $f(\cdot)$ and the condition number of the matrix $A$.

\begin{table}[!ht]
\centering
\caption{Comparison of number of iterations by the FGMRES-Sh and
MPGMRES-Sh solvers for the computation of $\exp(-A)b$, $\log(A)b$, and $A^{1/2}b$ evaluated
using~\eqref{eqn:trapz}. CPU times are reported in seconds. }
\begin{tabular}{cccc|cc} 
\multirow{2}{*}{Matrix name}& {Condition}
& \multicolumn{2}{c}{FGMRES-Sh} & \multicolumn{2}{c}{MPGMRES-Sh}
\\ \cline{3-6}
& number & Iter. count & CPU Time & Iter. count & CPU Time \\ \hline
\multicolumn{6}{c}{$\exp(-A)b$} \\\hline
plbuckle & $1.28 \times 10^ 6$ & 433 &  6.11 & 114 & 1.66 \\
nasa1824 & $1.89 \times 10^6$ &  529 & 9.20 & 120  & 2.82 \\
$1138$\_bus & $8.57 \times 10^6$ & 505 &  8.42 & 126 & 2.58 \\ \hline
\multicolumn{6}{c}{$\log(A)b$} \\\hline
plbuckle & $1.28 \times 10^ 6$ & 217 &  1.07 & 78 & 0.48 \\
nasa1824 & $1.89 \times 10^6$ &  217 & 1.65 & 81  & 0.85 \\
$1138$\_bus & $8.57 \times 10^6$ & 193  & 0.76 & 75 & 0.44 \\ \hline
\multicolumn{6}{c}{$A^{1/2}b$} \\ \hline
plbuckle & $1.28 \times 10^ 6$ & 64 &  0.43 & 60 & 0.19 \\
nasa1824 & $1.89 \times 10^6$ &  82 & 0.67 & 75  & 0.36 \\
$1138$\_bus & $8.57 \times 10^6$ &  82 & 0.38 & 69 & 0.23 \\ \hline
\end{tabular}•
\label{tab:matfun}
\end{table}


For these experiments we used the stopping criterion in Section~\ref{ss_stop} with atol = btol = $10^{-10}$. Three preconditioners were used with shifts at $z_1$, $z_{N/2}$, and $z_{N}$. For FGMRES-Sh, we use the same strategy to cycle through the preconditioners. Furthermore, $m/n_p$ was set to be $5$. 

We focus on evaluating the following three important matrix functions
$\exp(-A)$, $\log(A)$ and $A^{1/2}$, and in each case we evaluate
$f(A)b$ for a randomly generated vector of appropriate dimension.
For $\exp(-A)b$ evaluation, we use Method 1, whereas for $\log(A)b$ we use Method 2, and finally for 
$A^{1/2}$ we use Method 3 as described in~\cite{hale2008computing}. 
We take several matrices from the UF
sparse matrix collection~\cite{davis2011university}, which have previously been studied in the context of computing matrix functions in~\cite{chen2011computing}. The number of iterations taken by FGMRES-Sh
and MPGMRES-Sh is provided in
Table~\ref{tab:matfun}.  As can be seen, MPGMRES-Sh takes fewer
preconditioner solves (recall that the matrix-vector products with the shifted matrices are not necessary) to converge to
the desired tolerance. The CPU times reported are averaged over $5$ independent runs to get accurate timing results. In these examples, as in the hydrology examples, MPGMRES-Sh is more effective both in terms of iteration count and overall run time.

 \section{Conclusions}
\label{sec:Conc}

In this paper, we derived a new algorithm for efficiently solving shifted systems of equations as described by \eqref{eq:General} and \eqref{eq:GeneralK}. The newly proposed algorithm combines the flexible Krylov framework for shifted systems, developed in~\cite{saibaba2013flexible}, with multi-preconditioned GMRES, developed in~\cite{greifmpgmres}, and allows one to build a 
search space using multiple shift-and-invert preconditioners. We showed that the search space produced by our algorithm 
grows linearly with the number of preconditioners and number of iterations. The resulting algorithm converges in 
fewer iterations than related solvers GMRES-Sh and FGMRES-Sh, and although the cost per iteration of MPGMRES-Sh is higher, in our experience, 
the overall execution time is generally lower. 
The numerical examples drawn from applications in hydrology and matrix function evaluations demonstrate the superior 
performance in terms of iteration counts and overall run time of MPGMRES-Sh as compared to other standard 
preconditioned solvers that use a single preconditioner at each iteration.

\section{Acknowledgments}
The authors are greatly indebted to Michael Saunders for a careful and thorough reading of the paper. We would also like to thank Warren Barrash for his help with the hydrology examples.

%

\appendix
\section{Case of $n_p > 2$ preconditioners}\label{appendixA} 
In this appendix we prove Theorem~\ref{gen.theo}.
We first prove versions of Lemmas
\ref{lem:CrossProduct} and \ref{lem:CrossProductMN} for $n_p \geq 2$ preconditioners.
\begin{lemma}
Let $\{P_{j}^{-1}\}_{j=1}^{n_p}$ be shift-and-invert
preconditioners as defined in \eqref{eq:ShiftandInvert}, where
$\tau_{j}\neq\tau_{i}$ for $j\neq i$. Then for any vector $v$,
\[\left(\prod_{j=1}^{n_{p}}P_{j}^{-1}\right)\bm{v} \in
\Span\{P_{1}^{-1}\bm{v},\ldots,P_{n_{p}}^{-1}\bm{v}\}.\]
\label{lem:CPgen}
\end{lemma}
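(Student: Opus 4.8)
The plan is to generalize the partial-fraction identity used implicitly in Lemma~\ref{lem:CrossProduct} (where $\gamma_1 = 1/(\tau_2-\tau_1) = -\gamma_2$) to the product of all $n_p$ preconditioners. Specifically, I would show that there exist scalars $\gamma_1,\dots,\gamma_{n_p}$, depending only on the shifts $\{\tau_j\}$, such that
\[
\prod_{j=1}^{n_p} P_j^{-1} \;=\; \sum_{j=1}^{n_p} \gamma_j P_j^{-1},
\]
as an identity of matrices (rational functions of $A$), which then immediately gives the claimed span inclusion upon applying both sides to $\bm v$. The natural candidate is the partial fraction decomposition of the scalar rational function $\prod_{j=1}^{n_p} (t+\tau_j)^{-1}$: one has $\prod_j (t+\tau_j)^{-1} = \sum_j \gamma_j (t+\tau_j)^{-1}$ with $\gamma_j = \prod_{i \ne j} (\tau_i - \tau_j)^{-1}$, which is well-defined precisely because the $\tau_j$ are distinct.

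First I would set $\bm v = \bigl(\prod_{j=2}^{n_p} P_j\bigr)\bm w$ (equivalently clear denominators, exactly as in the $n_p=2$ proof) and reduce the desired identity to a polynomial identity in $A$: namely that
\[
\prod_{i=2}^{n_p}(A+\tau_i I)\,\bm w' \;=\; \sum_{j=1}^{n_p} \gamma_j \prod_{i \ne j}(A+\tau_i I)\,\bm w'
\]
after suitable relabeling — more cleanly, I would argue directly at the level of the scalar identity $1 = \sum_j \gamma_j \prod_{i\ne j}(t+\tau_i)$, which holds as polynomials in $t$ once the $\gamma_j$ are chosen as the partial-fraction coefficients, and then substitute $t \mapsto A$ and multiply through by $\bigl(\prod_i (A+\tau_i I)\bigr)^{-1}$. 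Since all factors $(A+\tau_i I)$ are invertible and commute with one another, this substitution is legitimate and the matrix identity follows. Applying it to $\bm v$ yields the lemma. I would also note the commutativity remark carries over: all the $P_j^{-1}$ commute since they are polynomials (indeed rational functions) in the single matrix $A$.

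The verification that $1 = \sum_{j=1}^{n_p}\gamma_j\prod_{i\ne j}(t+\tau_i)$ with $\gamma_j = \prod_{i\ne j}(\tau_i-\tau_j)^{-1}$ is the standard Lagrange-interpolation / partial-fractions fact (the right side is a polynomial of degree $\le n_p-1$ agreeing with the constant $1$ at the $n_p$ distinct points $t = -\tau_j$, hence identically $1$); I would state it and cite it as routine rather than expand it. The only genuine obstacle is essentially bookkeeping: making sure the "clear denominators" step is set up so that the formal scalar identity transfers correctly to the matrix setting, and that distinctness of the $\tau_j$ is used exactly where needed (namely to guarantee the $\gamma_j$ are finite). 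With Lemma~\ref{lem:CPgen} in hand, the analogue of Lemma~\ref{lem:CrossProductMN} for $n_p$ preconditioners follows by an induction essentially identical in structure to the $n_p = 2$ case, and then Theorem~\ref{gen.theo} follows by gathering like terms in the multivariate residual polynomial exactly as in the proof of Theorem~\ref{thm:LinearGrowth}.
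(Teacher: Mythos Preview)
Your argument is correct, but it differs from the paper's. The paper proves Lemma~\ref{lem:CPgen} by induction on $n_p$: assuming $\prod_{j=1}^{n_p}P_j^{-1}=\sum_{j=1}^{n_p}\gamma_jP_j^{-1}$, it multiplies both sides by $P_{n_p+1}^{-1}$ and then reduces each resulting cross-term $P_j^{-1}P_{n_p+1}^{-1}$ using the two-preconditioner Lemma~\ref{lem:CrossProduct}. You instead go directly to the full partial-fraction decomposition of $\prod_j(t+\tau_j)^{-1}$, identify the coefficients $\gamma_j=\prod_{i\ne j}(\tau_i-\tau_j)^{-1}$ via Lagrange interpolation, and push the scalar identity to matrices through the rational functional calculus in $A$ (legitimate since all $(A+\tau_iI)$ commute and are invertible). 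Your route is more explicit---it produces closed-form coefficients and makes the role of distinctness of the $\tau_j$ transparent---whereas the paper's inductive argument stays closer to the machinery already established for $n_p=2$ and avoids invoking any outside identity. Either way the span inclusion follows immediately.
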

\emph{Proof.}
We use induction on $n_{p}$. We show that there exist unique constants $\{ \gamma_{j}\}_{j=1}^{n_{p}}$
so that
\begin{equation}
\left(\prod_{j=1}^{n_{p}}P_{j}^{-1}\right) =
\sum_{j=1}^{n_{p}}\gamma_{j}P_{j}^{-1}.
\label{eq:Lemma1gen}
\end{equation}
holds for any integer $n_p$.
The proof for $n_{p}=1$ is straightforward. Assume~\eqref{eq:Lemma1gen} holds for $n_{p} >1$. For $n_{p}+1$
\[ \left(\prod_{j=1}^{n_{p} +1}P_{j}^{-1}\right) = \left(\prod_{j=1}^{n_{p}}P_{j}^{-1}\right)P_{n_{p} +1}^{-1} =
\left(\sum_{j=1}^{n_{p}}\gamma_{j}P_{j}^{-1}\right)P_{n_{p}+1}^{-1}.\]
The last equality follows because of the induction hypothesis. Next, assuming that $\tau_{n_{p}+1} \neq \tau_j$ for $j=1,\dots,n_{p}$, 
we use Lemma~\ref{lem:CrossProductMN} to simplify the expression
\begin{align*}  \sum_{j=1}^{n_{p}}\gamma_{j}P_{j}^{-1}P_{n_{p}+1}^{-1} = & \sum_{j=1}^{n_{p}}\gamma_j\left(\gamma_1^{(j)} P_j^{-1} + \gamma_2^{(j)}P_{n_{p}+1}^{-1}\right)\\
= & \sum_{j=1}^{n_{p}}  \gamma_j\gamma_1^{(j)} P_j^{-1}+ \left(\sum_{j=1}^{n_{p}}\gamma_j\gamma_2^{(j)}\right) P_{n_{p} +1}^{-1} 
\equiv  \sum_{j=1}^{n_{p}+1} \gamma_j'P_{j}^{-1}. ~~~~\cvd\
\end{align*}

\begin{lemma}
Let $\{P_{j}^{-1}\}_{j=1}^{n_p}$ be shift-and-invert
preconditioners as defined in \eqref{eq:ShiftandInvert} where
$\tau_{j}\neq\tau_{i}$ for $j\neq i$. Then,
for all vectors $v$ and all integers $m$,
\[\left(\prod_{j=1}^{n_p} P_{j}^{-m}\right)\bm{v} \in
\sum_{j=1}^{n_p} \mc{K}_{m} (P_{j}^{-1},P_{j}^{-1}\bm{v}).\]
\label{lem:CPMNgen}
\end{lemma}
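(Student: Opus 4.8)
The plan is to prove Lemma~\ref{lem:CPMNgen} by induction on $m$, mirroring the structure of the proof of Lemma~\ref{lem:CrossProductMN} but now keeping $n_p$ preconditioners in play simultaneously. For the base case $m=1$, the claim reads $\left(\prod_{j=1}^{n_p} P_j^{-1}\right)\bm{v} \in \sum_{j=1}^{n_p} \mc{K}_1(P_j^{-1},P_j^{-1}\bm{v}) = \Span\{P_1^{-1}\bm{v},\ldots,P_{n_p}^{-1}\bm{v}\}$, which is exactly the statement of Lemma~\ref{lem:CPgen}. So the base case is free.

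For the inductive step, I would assume the result holds for $m$ and consider $\left(\prod_{j=1}^{n_p} P_j^{-(m+1)}\right)\bm{v} = \left(\prod_{j=1}^{n_p} P_j^{-1}\right)\left(\prod_{j=1}^{n_p} P_j^{-m}\right)\bm{v}$, using that all the $P_j^{-1}$ commute (the commutativity remark after Lemma~\ref{lem:CrossProduct}, extended to $n_p$ factors, which follows since each pair commutes). By the induction hypothesis, $\left(\prod_{j=1}^{n_p} P_j^{-m}\right)\bm{v}$ is a linear combination of vectors of the form $P_i^{-k}\bm{v}$ with $1\le i\le n_p$ and $1\le k\le m$. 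Applying the single-degree product $\prod_{j=1}^{n_p}P_j^{-1}$ to each such term, I can first pull out the factor $P_i^{-1}$ (absorbing it into $P_i^{-k}$ to get $P_i^{-(k+1)}$, with $k+1\le m+1$), leaving $\left(\prod_{j\ne i} P_j^{-1}\right)P_i^{-(k+1)}\bm{v}$. The remaining task is to show each such term lies in $\sum_{j=1}^{n_p}\mc{K}_{m+1}(P_j^{-1},P_j^{-1}\bm{v})$.

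The heart of the argument — and the step I expect to be the main obstacle — is handling a mixed product $\left(\prod_{j\in S} P_j^{-1}\right)P_i^{-k}\bm{v}$ where $S$ is a set of indices not containing $i$ and $k\le m+1$: one needs to reduce it to single-variable powers of total degree at most $m+1$. I would do this by a nested induction (or by invoking Lemma~\ref{lem:CrossProduct} repeatedly). Peel off one factor $P_\ell^{-1}$ with $\ell\in S$ at a time: by Lemma~\ref{lem:CrossProduct}, $P_\ell^{-1}P_i^{-1}\bm{w} = \gamma_1 P_\ell^{-1}\bm{w} + \gamma_2 P_i^{-1}\bm{w}$, but one must be careful because $\bm{w}$ here is itself $P_i^{-(k-1)}\bm{v}$ times other factors. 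The cleaner route, matching the style of Lemma~\ref{lem:CrossProductMN}'s proof, is: the term $P_\ell^{-1}\cdots$ applied to something already expressed in powers $P_\ell^{-a}\bm{v}$ and $P_i^{-b}\bm{v}$ (and powers of the other indices) either raises an existing power by one or creates a new cross term $P_\ell^{-1}P_i^{-b}\bm{v}$, which by Lemma~\ref{lem:CrossProductMN} collapses into $\mc{K}_1(P_\ell^{-1},P_\ell^{-1}\bm{v}) + \mc{K}_b(P_i^{-1},P_i^{-1}\bm{v})$, still of degree $\le \max(1,b) \le m+1$. Iterating over all $|S|$ factors, the total degree never exceeds $m+1$ in any single variable, which is exactly what the conclusion requires. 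I would organize this by induction on $|S|$, with Lemmas~\ref{lem:CrossProduct} and~\ref{lem:CrossProductMN} as the engine, and then gather like terms to conclude the outer induction on $m$. The bookkeeping on which Krylov subspace each resulting monomial falls into — tracking that no degree exceeds $m+1$ — is the only delicate point; the algebra itself is routine given the two lemmas already established.
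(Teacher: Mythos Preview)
Your overall structure matches the paper's: induction on $m$, base case via Lemma~\ref{lem:CPgen}, and in the inductive step factor $\prod_j P_j^{-(m+1)} = \bigl(\prod_j P_j^{-1}\bigr)\bigl(\prod_j P_j^{-m}\bigr)$ using commutativity, then expand via the induction hypothesis. Where you diverge is in how you handle $\bigl(\prod_j P_j^{-1}\bigr) P_i^{-k}\bm{v}$. You pull out only the single factor $P_i^{-1}$, leaving $\bigl(\prod_{j\ne i} P_j^{-1}\bigr) P_i^{-(k+1)}\bm{v}$, and then propose a nested induction on $|S|$ to peel off the remaining factors one at a time. That works, but it is exactly the ``delicate bookkeeping'' you anticipate, and it is avoidable.

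The paper's move is simpler: commute the entire product past $P_i^{-k}$ to write the term as $P_i^{-k}\bigl(\prod_j P_j^{-1}\bigr)\bm{v}$, and then apply Lemma~\ref{lem:CPgen} \emph{to the full product acting on $\bm{v}$}, collapsing it in one stroke to $\sum_{\ell} \gamma_\ell P_\ell^{-1}\bm{v}$. Every resulting term is now of the form $P_i^{-k} P_\ell^{-1}\bm{v}$ with just two preconditioners and exponents $k\le m$ and $1$, so a single application of Lemma~\ref{lem:CrossProductMN} places it in $\mc{K}_k(P_i^{-1},P_i^{-1}\bm{v}) + \mc{K}_1(P_\ell^{-1},P_\ell^{-1}\bm{v}) \subseteq \sum_j \mc{K}_{m+1}(P_j^{-1},P_j^{-1}\bm{v})$. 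No nested induction on $|S|$ is needed. The gain from the paper's route is purely economy: one use of Lemma~\ref{lem:CPgen} replaces your entire inner induction.
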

\begin{proof} 
We proceed by induction on $m$. The cases when
$m=1$ is true by Lemma~\ref{lem:CPgen}.
Assume that the Lemma holds for $m > 1$, i.e., there exist
coefficients such that
\begin{equation*}
\left(\prod_{k =1}^{n_p} P_{k}^{-m}\right)\bm{v} =
\sum_{j=1}^{m} \alpha_{1}^{(j)} P_{1}^{-j}\bm{v} + \cdots +
\sum_{j=1}^{m} \alpha_{n_p}^{(j)} P_{n_p}^{-j}\bm{v} .
\end{equation*}
Next, consider
\begin{align*}
\left(\prod_{k=1}^{n_p}P_{k}^{-(m+1)}\right) \bm{v}& =
\left(\prod_{k=1}^{n_p}P_{k}^{-1}\right)\left(\prod_{k=1}^{n_p}
P_{k}^{-m}\right)\bm{v} \\
&= \left(\prod_{k=1}^{n_p}P_{k}^{-1}\right)\left(
\sum_{j=1}^{m} \alpha_{1}^{(j)} P_{1}^{-j}\bm{v} + \cdots +
\sum_{j=1}^{m} \alpha_{n_p}^{(j)} P_{n_p}^{-j}\bm{v}\right) \\
&= \sum_{j=1}^{m} \alpha_{1}^{(j)} \left(\prod_{k=
1}^{n_p} P_{k}^{-1}\right) P_{1}^{-j}\bm{v} + \cdots +
\sum_{j=1}^{m} \alpha_{n_p}^{(j)} \left( \prod_{k=1}^{n_p} P_{k}^{-1}\right)P_{n_p}^{-j} \bm{v}.
\end{align*}
The first equality is a result of the commutativity of the
shift-and-invert preconditioners. The second equality follows
from the application of the induction hypothesis. Next, consider the last expression, which 
we rewrite as 
\begin{align*}
\sum_{i=1}^{n_p} \sum_{j=1}^{m}\alpha_i^{(j)}P_i^{-j} \left(\prod_{k=1}^{n_p} P_k^{-1}\right)v 
&= \sum_{i=1}^{n_p} \sum_{j=1}^{m}\alpha_i^{(j)}P_i^{-j} \left(\sum_{k=1}^{n_p} \gamma_k P_k^{-1}v \right) \\
&= \sum_{i=1}^{n_p} \sum_{j=1}^{m}    
\sum_{k=1}^{n_p} 
\alpha_i^{(j)}
\gamma_k 
P_i^{-j} 
P_k^{-1}
v, 
\end{align*}
where the first equality follows from an application of Lemma~\ref{lem:CPgen}.  
By repeated application of Lemma~\ref{lem:CrossProductMN}, 
the Lemma follows.
\end{proof}

{\em Proof of Theorem~\ref{gen.theo}.}
Recall that by \eqref{eq:mpresidual} the residual
can be expressed as a multivariate-polynomial in the
preconditioners. Due to Lemma \ref{lem:CPMNgen}, any cross-product term can be reduced to a linear combination of powers
only of $P_{j}^{-1}$. Gathering  like terms together, the
residual can be expressed as a sum of single-variate
polynomials in each preconditioner.  \cvd

\section{Algorithm for more general shifted systems}\label{app_gen}
For convenience, we provide the selective version of the algorithm for solving~\eqref{eq:GeneralK}. The special case for~\eqref{eq:General} can be obtained by setting $K=A$ and $M=I$.

\begin{algorithm}[htp]
\begin{algorithmic}[1]
\REQUIRE 
Matrices $K$ and $M$, right-hand side $\bm{b}$, preconditioners $\{K + \tau_j M\}_{j=1}^{n_p}$, shifts $\{\sigma_j\}_{j=1}^{n_\sigma}$, $\{\tau_j\}_{j=1}^{n_p}$ and number of iterations $m$.
\STATE Compute $\beta = \norm{\bm{b}}_{2}$ and $V^{(1)} =  \bm{b}/\beta$.
\FOR {$k=1,\dots,m$ }  
\STATE 
$\hat{v}_k =  V^{(k)}e_{n_p}$,  \mbox{\rm and }  
$Z^{(k)} = [P_{1}^{-1}\hat{v}_k,\ldots,P_{n_p}^{-1}\hat{v}_k]$. 
\STATE $W= MZ^{(k)}$ 
\FOR {j = 1,\dots,k}
\STATE $H^{(j,k)} = (V^{(j)})^TW$
\STATE $W = W - V^{(j)}H^{(j,k)}$
\ENDFOR
\STATE $W =  V^{(k+1)}H^{(k+1,k)}$ \  \COMMENT{thin QR factorization}
\ENDFOR
\FOR {$j=1,\dots,n_\sigma$}
\STATE Compute $\bm{y}_m(\sigma_{j}) = \argmin_\bm{y} \norm{\beta \bm{e}_1 - \bar{\mc{H}}_m(\sigma_j;T_m)\bm{y} }$ for each shift. 
\STATE $\bm{x}_{m}(\sigma_{j}) = \mc{Z}_{m}\bm{y}_{m}(\sigma_{j})$, where $\mc{Z}_{m} = [Z^{(1)}, \cdots, Z^{(m)}]$ 
\ENDFOR
\RETURN The approximate solution $x_m(\sigma_j)$ for $j=1,\dots,n_\sigma$. 
\end{algorithmic}
\caption{Selective MPGMRES-Sh for~\eqref{eq:GeneralK}}
\label{alg:mpgmressh_sel}
\end{algorithm}

\bibliographystyle{abbrv} \bibliography{mpgmres-sh}

\begin{thebibliography}{10}

\bibitem{Ahmad.Szyld.VanGijzen.12}
M.~I. Ahmad, D.~B. Szyld, and M.~B. van Gijzen.
\newblock Preconditioned multishift {BiCG} for $\mathcal{H}_2$-optimal model
  reduction.
\newblock Technical Report 12-06-15, Department of Mathematics, Temple
  University, June 2012.
\newblock Revised March 2013 and June 2015.

\bibitem{bakhos2015fast}
T.~Bakhos, A.~K. Saibaba, and P.~K. Kitanidis.
\newblock A fast algorithm for parabolic {PDE}-based inverse problems based on
  laplace transforms and flexible {K}rylov solvers.
\newblock {\em Journal of Computational Physics}, 299:940--954, 2015.

\bibitem{baumann2014nested}
M.~Baumann and M.~B. van Gijzen.
\newblock Nested {K}rylov methods for shifted linear systems.
\newblock {\em SIAM Journal on Scientific Computing}, 37(5):S90--S112, 2015.

\bibitem{bell2011pyamg}
W.~Bell, L.~Olson, and J.~Schroder.
\newblock {PyAMG}: Algebraic multigrid solvers in {P}ython v2. 0, 2011.
\newblock {\em URL http://www. pyamg. org. Release}, 2, 2011.

\bibitem{cardiff2013hydraulic}
M.~Cardiff, W.~Barrash, and P.~K. Kitanidis.
\newblock Hydraulic conductivity imaging from 3-d transient hydraulic
  tomography at several pumping/observation densities.
\newblock {\em Water Resources Research}, 49(11):7311--7326, 2013.

\bibitem{chen2011computing}
J.~Chen, M.~Anitescu, and Y.~Saad.
\newblock Computing f(${A}$)b via least squares polynomial approximations.
\newblock {\em SIAM Journal on Scientific Computing}, 33:195--222, 2011.

\bibitem{davis2011university}
T.~A. Davis and Y.~Hu.
\newblock The {U}niversity of {F}lorida sparse matrix collection.
\newblock {\em ACM Transactions on Mathematical Software}, 38:1, 2011.

\bibitem{frommer1995many}
A.~Frommer, B.~N{\"o}ckel, S.~G{\"u}sken, T.~Lippert, and K.~Schilling.
\newblock Many masses on one stroke: Economic computation of quark propagators.
\newblock {\em International Journal of Modern Physics C}, 6:627--638, 1995.

\bibitem{greifmpgmres}
C.~Greif, T.~Rees, and D.~B. Szyld.
\newblock {MPGMRES}: a generalized minimum residual method with multiple
  preconditioners.
\newblock Technical Report 11-12-23, Department of Mathematics, Temple
  University, Dec. 2011.
\newblock Revised September 2012, January 2014, and March 2015. Also available
  as Technical Report TR-2011-12, Department of Computer Science, University of
  British Columbia.

\bibitem{Greif.Rees.Szyld.14a}
C.~Greif, T.~Rees, and D.~B. Szyld.
\newblock Additive {S}chwarz with variable weights.
\newblock In J.~Erhel, M.~Gander, L.~Halpern, G.~Pichot, T.~Sassi, and
  O.~Widlund, editors, {\em Domain Decomposition Methods in Science and
  Engineering XXI}, Lecture Notes in Computer Science and Engineering, pages
  661--668. Springer, Berlin and Heidelberg, 2014.

\bibitem{gu2007flexible}
G.-D. Gu, X.-L. Zhou, and L.~Lin.
\newblock A flexible preconditoned {A}rnoldi method for shifted linear systems.
\newblock {\em Journal of Computational Mathematics}, 25, 2007.

\bibitem{hale2008computing}
N.~Hale, N.~J. Higham, and L.~N. Trefethen.
\newblock Computing ${A}^\alpha$, $\log({A})$, and related matrix functions by
  contour integrals.
\newblock {\em SIAM Journal on Numerical Analysis}, 46:2505--2523, 2008.

\bibitem{higham2010computing}
N.~J. Higham and A.~H. Al-Mohy.
\newblock Computing matrix functions.
\newblock {\em Acta Numerica}, 19:159--208, 2010.

\bibitem{jegerlehner1996krylov}
B.~Jegerlehner.
\newblock {K}rylov space solvers for shifted linear systems.
\newblock {\em arXiv preprint hep-lat/9612014}, 1996.

\bibitem{LoggMardalEtAl2012a}
A.~Logg, K.-A. Mardal, G.~N. Wells, et~al.
\newblock {\em Automated Solution of Differential Equations by the Finite
  Element Method}.
\newblock Springer, 2012.

\bibitem{LoggWells2010a}
A.~Logg and G.~N. Wells.
\newblock {DOLFIN}: Automated finite element computing.
\newblock {\em ACM Transactions on Mathematical Software}, 37, 2010.

\bibitem{LoggWellsEtAl2012a}
A.~Logg, G.~N. Wells, and J.~Hake.
\newblock {\em {DOLFIN}: a C++/Python Finite Element Library}, chapter~10.
\newblock Springer, 2012.

\bibitem{meerbergen2003solution}
K.~Meerbergen.
\newblock The solution of parametrized symmetric linear systems.
\newblock {\em SIAM Journal on Matrix Analysis and Applications},
  24:1038--1059, 2003.

\bibitem{meerbergen2010lanczos}
K.~Meerbergen and Z.~Bai.
\newblock The {L}anczos method for parameterized symmetric linear systems with
  multiple right-hand sides.
\newblock {\em SIAM Journal on Matrix Analysis and Applications},
  31:1642--1662, 2010.

\bibitem{paige1982lsqr}
C.~C. Paige and M.~A. Saunders.
\newblock {LSQR}: An algorithm for sparse linear equations and sparse least
  squares.
\newblock {\em ACM Transactions on Mathematical Software}, 8:43--71, 1982.

\bibitem{saad1993flexible}
Y.~Saad.
\newblock A flexible inner-outer preconditioned {GMRES} algorithm.
\newblock {\em SIAM Journal on Scientific Computing}, 14:461--469, 1993.

\bibitem{saad1986gmres}
Y.~Saad and M.~H. Schultz.
\newblock {GMRES}: A generalized minimal residual algorithm for solving
  nonsymmetric linear systems.
\newblock {\em SIAM Journal on Scientific and Statistical Computing},
  7:856--869, 1986.

\bibitem{saibaba2013flexible}
A.~K. Saibaba, T.~Bakhos, and P.~K. Kitanidis.
\newblock A flexible {K}rylov solver for shifted systems with application to
  oscillatory hydraulic tomography.
\newblock {\em SIAM Journal on Scientific Computing}, 35:A3001--A3023, 2013.

\bibitem{saibaba2012efficient}
A.~K. Saibaba and P.~K. Kitanidis.
\newblock Efficient methods for large-scale linear inversion using a
  geostatistical approach.
\newblock {\em Water Resources Research}, 48, 2012.

\bibitem{saibaba2015fast}
A.~K. Saibaba and P.~K. Kitanidis.
\newblock Fast computation of uncertainty quantification measures in the
  geostatistical approach to solve inverse problems.
\newblock {\em Advances in Water Resources}, 82:124 -- 138, 2015.

\bibitem{simoncini2003theory}
V.~Simoncini and D.~B. Szyld.
\newblock Theory of inexact {K}rylov subspace methods and applications to
  scientific computing.
\newblock {\em SIAM Journal on Scientific Computing}, 25:454--477, 2003.

\bibitem{Simoncini.Szyld.07}
V.~Simoncini and D.~B. Szyld.
\newblock Recent computational developments in {K}rylov subspace methods for
  linear systems.
\newblock {\em Numerical Linear Algebra with Applications}, 14:1--59, 2007.

\bibitem{Szyld.Vogel.01}
D.~B. Szyld and J.~A. Vogel.
\newblock A flexible quasi-minimal residual method with inexact
  preconditioning.
\newblock {\em SIAM Journal on Scientific Computing}, 23:363--380, 2001.

\bibitem{vogel2007flexible}
J.~A. Vogel.
\newblock Flexible {BiCG} and flexible {Bi-CGSTAB} for nonsymmetric linear
  systems.
\newblock {\em Applied Mathematics and Computation}, 188:226--233, 2007.

\end{thebibliography}
\end{document}